\providecommand{\tabularnewline}{\\}
\providecommand{\algorithmname}{Algorithm}
\DeclareMathOperator*{\argmin}{arg\,min}
\DeclareMathOperator*{\prox}{prox}
\begin{document}

\title{Fast Proximal Gradient Methods for Nonsmooth Convex Optimization
for Tomographic Image Reconstruction}

\author{Elias S. Helou, Marcelo V. W. Zibetti, and Gabor T. Herman}

\institute{Elias S. Helou, Instituto de Ciências Matemáticas e de Computação,
elias@icmc.usp.br\\
Marcelo V. W. Zibetti, Center for Advanced Imaging Innovation and
Research (CAI\textsuperscript{2}R), New York University School of
Medicine, Marcelo.WustZibetti@nyumc.org\\
Gabor T. Herman, PhD Program in Computer Science, City University
of New York, gabortherman@yahoo.com}
\maketitle
\begin{abstract}
The Fast Proximal Gradient Method (FPGM) and the Monotone FPGM (MFPGM)
for minimization of nonsmooth convex functions are introduced and
applied to tomographic image reconstruction. Convergence properties
of the sequence of objective function values are derived, including
a $O\left(1/k^{2}\right)$ non-asymptotic bound. The presented theory
broadens current knowledge and explains the convergence behavior of
certain methods that are known to present good practical performance.
Numerical experimentation involving computerized tomography image
reconstruction shows the methods to be competitive in practical scenarios.
Experimental comparison with Algebraic Reconstruction Techniques are
performed uncovering certain behaviors of accelerated Proximal Gradient
algorithms that apparently have not yet been noticed when these are
applied to tomographic image reconstruction.\\
\textbf{Keywords}$\;$Computerized tomography imaging, Convex optimization,
Proximal gradient methods, Iterative algorithms.\\
\textbf{Mathematics Subject Classificatio}n$\;$65K05, 68U10, 90C25,
90C06, 92C55, 94A08
\end{abstract}

\section{Introduction}

In this paper we introduce the Fast Proximal Gradient Methods (FPGM)
for the solution of convex minimization problems of the form
\begin{equation}
\min\quad\Psi(\mathbf{x}):=f(\mathbf{x})+\phi(\mathbf{x}),\label{eq:problem}
\end{equation}
where $f:\mathbb{R}^{n}\to\mathbb{R}$ is a smooth convex function
with Lipschitz continuous gradient and $\phi:\mathbb{R}^{n}\to\mathbb{R}$
is a proper convex function. It is not assumed that $\phi$ is smooth.
From now on, the symbols $f$ and $\phi$ will always represent functions
satisfying these assumptions.

Many practical problems fit this general model. In particular, for
image reconstruction from tomographic data, $f(\boldsymbol{x})$ is
usually going to be a measure of consistency, according to the data
acquisition model, of image $\boldsymbol{x}$ with the data coming
from the scanner whereas $\phi(\boldsymbol{x})$ is a measure of structure.
For example, $\phi$ could enforce sparsity, smoothness, or some other
desirable \emph{a priori} information known about the image being
reconstructed.

The algorithms we propose here lie between the Optimized Iterative
Shrinkage-Thresholding Algorithm (OISTA) presented and experimented
with in~\cite{Kim2016c}, but for which no convergence proof exists,
and the Overrelaxed Monotone Fast Iterative Shrinkage-Thresholding
Algorithm (OMFISTA)~\cite{Yamagishi2011}, for which convergence
proofs do exist. One contribution of the present paper is a better
understanding of the convergence properties of these algorithms. We
get there by increasing the range of allowed parameters values for
OMFISTA, consequently providing a way of justifying convergence of
OISTA. We exhibit numerical experimentation using FPGM for computerized
tomography image reconstruction \cite{Herman2009} with both real
and synthetic data and compare it with other methods from the viewpoint
of convergence speed and image quality.

The paper is organized as follows. The remaining of the present section
is dedicated to introduce the mathematical elements that will be used
in the construction of the algorithm. Section~\ref{sec:Literature-Review}
contains a non-extensive literature review about the subject of fast
proximal gradient methods and fast first-order methods. The new algorithm
is introduced in Section~\ref{sec:The-New-Method} and its convergence
is analyzed in Section~\ref{sec:Convergence-Analysis}. Numerical
experiments are presented in Section~\ref{sec:Experiments}. Finally,
Section~\ref{sec:Conclusions-and-Future} brings our conclusions
and delineates plans for future work.

\subsection{Proximal Operators}

Proximal operators are useful tools for convex optimization, presenting
an interesting balance between abstract power and practical applicability.
Given a proper convex function $\phi:\mathbb{R}^{n}\to\mathbb{R}$
and a positive real number $L$, the $L$-proximal point of $\phi$
from a point $\mathbf{x}\in\mathbb{R}^{n}$ is defined as: 
\begin{equation}
\prox_{\phi,L}(\mathbf{x}):=\argmin_{\mathbf{y}\in\mathbb{R}^{n}}\left\{ \phi(\mathbf{y})+\frac{L}{2}\left\Vert \mathbf{y}-\mathbf{x}\right\Vert ^{2}\right\} .
\end{equation}
We will, when there is no risk of confusion, refer to the $L$-proximal
point of $\phi$ from $\mathbf{x}\in\mathbb{R}^{n}$ simply as the
proximal point.

Although the computation of the proximal point involves a minimization,
in many cases of interest this computation can be performed in a finite
number of simple steps. Examples of functions $\phi$ that allow efficient
closed-form representations of $\prox_{\phi,L}$ include the $\ell_{1}$
norm of an orthonormal transformation of a vector, that is, $\phi(\mathbf{x})=\left\Vert Q\mathbf{x}\right\Vert _{1}$
where $Q\in\mathbb{C}^{n\times n}$ with $Q^{*}Q=I$, and the nuclear
norm of a matrix $M\in\mathbb{C}^{m\times n}$, i.e., $\phi(M)=\left\Vert M\right\Vert _{*}=\sum_{i=1}^{\min\{m,n\}}|\sigma_{i}|$,
where the $\sigma_{i}$ are the singular values of $M$. For some
other important functions, such as the Total Variation ($TV$), there
are effective iterative procedures than can quickly approximate $\prox_{\phi,L}(\mathbf{x})$~\cite{Beck2009a}.

\subsection{Proximal Gradient Methods}

Let us define the proximal gradient operator as:
\begin{equation}
P_{L,f,\phi}(\mathbf{x}):=\prox_{\phi,L}\left(\mathbf{x}-\frac{1}{L}\nabla f(\mathbf{x})\right).\label{eq:prox_grad}
\end{equation}
For simplicity, we will omit $f$ and $\phi$ from the notation unless
the omission causes ambiguity. A Proximal Gradient Method (PGM) for
the minimization of~\eqref{eq:problem} is then given by an iterative
process in which
\begin{equation}
\mathbf{x}_{k+1}=P_{L}\left(\mathbf{x}_{k}\right),
\end{equation}
see \cite{Parikh2013}. For large enough $L$, these iterations converge,
but they do so slowly, and rates of the kind $\Psi(\mathbf{x}_{k})-\Psi(\mathbf{x}^{*})=O\left(1/k\right)$,
where $\mathbf{x}^{*}\in\mathbb{R}^{n}$ is a minimizer of $\Psi$,
are often predicted theoretically and observed in practice.

For smooth problems of the form
\begin{equation}
\min\quad f(\mathbf{x}),\label{eq:problem_smooth}
\end{equation}
 Nesterov~\cite{nesterov1983method} introduced the Fast Gradient
Method (FGM)
\begin{equation}
\mathbf{x}_{k}=\mathbf{y}_{k}-\frac{1}{L}\nabla f\left(\mathbf{y}_{k}\right),\label{eq:gradient}
\end{equation}
where $\mathbf{y}_{1}=\mathbf{x}_{0}$ and
\begin{equation}
\mathbf{y}_{k+1}=\mathbf{x}_{k}+\frac{t_{k}-1}{t_{k+1}}\left(\mathbf{x}_{k}-\mathbf{x}_{k-1}\right),
\end{equation}
with $t_{1}=1$ and $t_{k}=\left(1+\sqrt{1+4t_{k-1}^{2}}\right)/2$.
Later, Beck and Teboulle proved that the same algorithmic form leads
to a fast proximal gradient algorithm, called Fast Iterative Shrinkage-Thresholding
Algorithm (FISTA)~\cite{Beck2009,Beck2009a}, when~\eqref{eq:gradient}
is replaced by
\begin{equation}
\mathbf{x}_{k}=P_{L}\left(\mathbf{y}_{k}\right).
\end{equation}
These fast algorithms have much better theoretical and practical convergence
properties than a PGM, at only a small extra computational cost, specially
in high-dimensional problems such as occur in computerized tomography
image reconstruction \cite{Herman2009}.

\section{Literature Review\label{sec:Literature-Review}}

The introduction of FISTA renewed interest in Nesterov's ideas because
the separable smooth plus nonsmooth model has a wider application
range than smooth minimization, including many image processing and
reconstruction problems within the Compressive Sensing (CS) framework~\cite{don06}.
Nesterov introduced his seminal ideas by considering First-Order Methods
(FOMs) for the solution of~\eqref{eq:problem_smooth}. A FOM is a
method with an iterative step of the form:
\begin{equation}
\mathbf{x}_{k+1}=\mathbf{x}_{k}-\frac{1}{L}\sum_{i=0}^{k}h_{k+1,i}\nabla f\left(\mathbf{x}_{i}\right),
\end{equation}
where $h_{k+1,i}\in\mathbb{R}$ for $k\in\mathbb{N}$ and $i\in\{0,1,\dots,k\}$.
For this class of methods, there is a smooth convex function $f$
with Lipschitz continuous gradient such that the sequence of iterates
of every FOM would satisfy~\cite{nes04} 
\begin{equation}
f(\mathbf{x}_{k})-f(\mathbf{x}^{*})\geq\frac{3L\left\Vert \mathbf{x}_{0}-\mathbf{x}^{*}\right\Vert ^{2}}{32(k+1)^{2}},\label{eq:lower_FOM_bound}
\end{equation}
thereby establishing a lower bound for the worst case scenario. On
the other hand, the optimality gap, that is, the difference between
the objective function value at the current iteration and the optimal
objective function value, of the FGM was shown to be bounded by
\begin{equation}
f(\mathbf{x}_{k})-f(\mathbf{x}^{*})\leq\frac{2L\left\Vert \mathbf{x}_{0}-\mathbf{x}^{*}\right\Vert ^{2}}{(k+1)^{2}}.\label{eq:upper_FISTA_bound}
\end{equation}
Since FGM is a FOM, this algorithm has a convergence rate in terms
of reduction of the objective function value with optimal leading
exponent. However, the difference between the coefficients in the
bounds~\eqref{eq:lower_FOM_bound}~and~\eqref{eq:upper_FISTA_bound}
left hope for improvement of the convergence rate of FOMs.

More recently, Kim and Fessler~\cite{Kim2016c} refined an approach
by Drori and Teboulle~\cite{drt14} and obtained a method called
Optimized Gradient Method (OGM) with the proven property that
\begin{equation}
f(\mathbf{x}_{k})-f(\mathbf{x}^{*})\leq\frac{L\left\Vert \mathbf{x}_{0}-\mathbf{x}^{*}\right\Vert ^{2}}{(k+1)^{2}},
\end{equation}
with iterations given by
\begin{equation}
\mathbf{x}_{k}=\mathbf{y}_{k}-\frac{1}{L}\nabla f\left(\mathbf{y}_{k}\right),
\end{equation}
\begin{equation}
\mathbf{y}_{k+1}=\mathbf{x}_{k}+\frac{t_{k}-1}{t_{k+1}}\left(\mathbf{x}_{k}-\mathbf{x}_{k-1}\right)+\frac{t_{k}}{t_{k+1}}\left(\mathbf{x}_{k}-\mathbf{y}_{k}\right).\label{eq:y-update-OGM}
\end{equation}

The generalization of FGM to the nonsmooth case came when Beck and
Teboulle~\cite{Beck2009} showed that the gradient step can be replaced
by a proximal gradient step leading to the FISTA
\begin{equation}
\mathbf{x}_{k}=P_{L}\left(\mathbf{y}_{k}\right),
\end{equation}
\begin{equation}
\mathbf{y}_{k+1}=\mathbf{x}_{k}+\frac{t_{k}-1}{t_{k+1}}\left(\mathbf{x}_{k}-\mathbf{x}_{k-1}\right).
\end{equation}
FISTA, similarly to FGM, satisfies
\begin{equation}
\Psi(\mathbf{x}_{k})-\Psi(\mathbf{x}^{*})\leq\frac{2L\left\Vert \mathbf{x}_{0}-\mathbf{x}^{*}\right\Vert ^{2}}{(k+1)^{2}}.
\end{equation}
A monotone version of FISTA, called MFISTA was also developed~\cite{Beck2009a},
which shares with FISTA the same bound on the optimality gap. Kim
and Fessler successfully experimented with a version of the OGM where
the gradient step is replaced by the proximal gradient step~\cite{Kim2015}.
Although numerically the algorithm so obtained behaved well, no theoretical
convergence proof was provided.

In the present paper we prove convergence of algorithms of the form
\begin{equation}
\mathbf{x}_{k}=P_{L_{k}}\left(\mathbf{y}_{k}\right),\label{eq:PL(y)}
\end{equation}
\begin{equation}
\mathbf{y}_{k+1}=\mathbf{x}_{k}+\frac{t_{k}-1}{t_{k+1}}\left(\mathbf{x}_{k}-\mathbf{x}_{k-1}\right)+\frac{t_{k}}{t_{k+1}}\left(\eta_{k}-1\right)\left(\mathbf{x}_{k}-\mathbf{y}_{k}\right).\label{eq:y-update}
\end{equation}
Under suitable conditions on the sequence $\{\eta_{k}\}\subset\mathbb{R}$,
convergence can be proven to follow
\begin{equation}
\Psi(\mathbf{x}_{k})-\Psi(\mathbf{x}^{*})\leq\frac{2L_{k}\left\Vert \mathbf{x}_{0}-\mathbf{x}^{*}\right\Vert ^{2}}{\eta_{k}(k+1)^{2}}.
\end{equation}
Notice that for $\eta_{k}=2$ the $\mathbf{y}_{k}$ update in~\eqref{eq:y-update}
is the same than the one of OGM and the above convergence bound also
equals the one for OGM. It appears not to be possible, however, to
prove convergence of algorithm~\eqref{eq:PL(y)}-\eqref{eq:y-update}
with $\eta_{k}\equiv2$ for every problem~\cite{thg17}. Instead,
in our approach a valid range of values for the coefficient $\eta_{k}$
is computed during the execution of iteration $k$ of the algorithm,
but we noticed that in practice the upper bound of this range is often
larger than $2$, allowing, for example, us to often use $\eta_{k}\approx2$
and thus to make \eqref{eq:y-update} approximately equal to~\eqref{eq:y-update-OGM}.

A similar method is the generalization of FISTA called OMFISTA~\cite{Yamagishi2011}.
The convergence theory provided for OMFISTA was valid only in the
range $\eta_{k}\in(0,1]$, and so did not include the case $\eta_{k}=2$
that characterizes OISTA. In \cite{Zibetti2019a}, monotone FISTA
with variable acceleration (MFISTA-VA) is proposed, and it is shown
that an $\eta_{k}>2$ may be chosen when the new iterate $\mathbf{x}_{k}$
satisfies $\Psi\left(\mathbf{x}_{k}\right)<\Psi\left(P_{L_{k}}\left(\mathbf{y}_{k}\right)\right)$,
which may lead to an extra algorithmic step and variable acceleration.
Here we generalize what is proposed in \cite{Zibetti2019a} and prove
that similar bounds can be achieved when \eqref{eq:PL(y)} is used,
thereby making our results relevant to the convergence of OISTA.

All of the above approaches speed up the Iterative Soft Thresholding
Algorithm (ISTA), which has iterations of the form $\mathbf{x}_{k+1}=P_{L}\left(\mathbf{x}_{k}\right)$,
using a momentum term that considers previous iterates when updating
the current iterate. Another such approach, based on a different insight
than Nesterov's, is the Two-Step Iterative Soft Thresholding (TwIST)~\cite{bif07}
method. TwIST can be applied to problems of the form
\begin{equation}
\min\quad\frac{1}{2}\|K\mathbf{x}-\boldsymbol{b}\|^{2}+\phi(\mathbf{x})\label{eq: Twist}
\end{equation}
and is considerably faster than IST in the case of poorly conditioned
invertible operators $K$. TwIST is inspired by a two-step method
for the solution of the linear system of equations $K\boldsymbol{x}=\boldsymbol{b}$
and is obtained by replacing appearances of a scaled version of the
gradient $\nabla f$ of $f(\boldsymbol{x})=\frac{1}{2}\|K\mathbf{x}-\boldsymbol{b}\|^{2}$
in that algorithm by the proximal-gradient operator $P_{L}$.

SpaRSA (Sparse Reconstruction by Separable Approximation)~\cite{wnf09}
is another approach for improving over IST. In this case, the smooth
part $f$ of problem~\eqref{eq:problem} is not assumed to be convex
and the key ingredient for the algorithm's efficiency seems to be
a nonmonotone line-search procedure inspired by the seminal approach
by Grippo, Lampariello and Lucidi (GLL)~\cite{gll86}. One interesting
question regarding SpaRSA is whether the GLL line search could be
replaced in SpaRSA by the nonmonontone line search of Zhang and Hagher
(ZH)~\cite{zhh04}, which in many cases compares favorably to GLL
and whether these benefits of ZH would carry over to SpaRSA.

Yet another approach for generalizing and improving IST is the Generalized
Iterative Shrinkage and Thresholding (GIST) algorithm~\cite{gzl13}.
The theory behind GIST convergence results removes the convexity hypothesis
over $f$ and relax that over $\phi$ in~\eqref{eq:problem} by supposing
instead that $\phi$ can be written as a difference of two convex
functions. This latter assumption generalizes considerably over the
convexity hypothesis usually imposed to the non-smooth part in IST-inspired
methods while retaining important theoretical tools such as the existence
of subderivatives and, consequently, the existence of useful necessary
optimality conditions for this function.

\section{The New Methods \label{sec:The-New-Method}}

In the current section we present our proposed algorithms in detail.
We start by describing an abstract method, which we name Non-Deterministic
PGM (NDPGM).

\begin{algorithm}
\caption{NDPGM}
\label{PGM_abstract}

\begin{algorithmic}[1]

\STATE{\textbf{input }$t_{1}\geq1$, $\mathbf{y}_{1}\in\mathbb{R}^{n}$,
$N\in\mathbb{N}$}

\STATE{\textbf{for }$k=1,...,N$}

\STATE{~~~~\textbf{set }$L_{k}\in\mathbb{R}^{n}$}

\STATE{~~~~\textbf{set }$\mathbf{z}_{k}=P_{L_{k}}\left(\mathbf{y}_{k}\right)$}

\STATE{~~~~\textbf{set }$\mathbf{x}_{k}\in\mathbb{R}^{n}$} 

\STATE{~~~~\textbf{set }$\eta_{k}\in\mathbb{R}$}

\STATE{~~~~\textbf{set $t_{k+1}=\frac{1+\sqrt{1+4t_{k}^{2}}}{2}$}} 

\STATE{~~~~\textbf{set $\mathbf{y}_{k+1}=\mathbf{x}_{k}+\frac{t_{k}-1}{t_{k+1}}\left(\mathbf{x}_{k}-\mathbf{x}_{k-1}\right)+\frac{t_{k}}{t_{k+1}}\left(\mathbf{z}_{k}-\mathbf{x}_{k}\right)+\frac{t_{k}}{t_{k+1}}\left(\eta_{k}-1\right)\left(\mathbf{z}_{k}-\mathbf{y}_{k}\right)$}} 

\end{algorithmic}
\end{algorithm}

Note the non-deterministic nature of Steps 3, 5 and 6 of NDPGM. This
leaves some important sequences unspecified, including the sequence
of iterates $\left\{ \mathbf{x}_{k}\right\} \subset\mathbb{R}^{n}$
itself. This is done on purpose so that NDPGM is a model for several
methods. As two examples, notice how both FISTA~\cite{Beck2009}
and MFISTA~\cite{Beck2009a}, described respectively in Algorithm~\ref{algo:FISTA}
and Algorithm~\ref{algo:MFISTA}, fit this description. In both cases
$\eta_{k}\equiv1$ is used and a backtracking procedure is performed
at Steps~\ref{step:fista_backtracking_loop}~and~\ref{step:fista_backtracking_increase}
of both algorithms in order to determine the parameter $L_{k}$ at
each iteration, measuring the difference between the function value
$\Psi\left(P_{L_{k}}\left(\mathbf{y}_{k}\right)\right)$ and $Q_{L_{k}}\left(P_{L_{k}}\left(\mathbf{y}_{k}\right),\mathbf{y}_{k}\right)$
where, for $L\in\mathbb{R}$ and $\mathbf{x}$, $\mathbf{y}\in\mathbb{R}^{n}$,
$Q_{L}(\mathbf{x},\mathbf{y})$ is the surrogate function:
\begin{equation}
Q_{L}(\mathbf{x},\mathbf{y}):=f(\mathbf{y})+\left\langle \nabla f(\mathbf{y}),\mathbf{x}-\mathbf{y}\right\rangle +\frac{L}{2}\left\Vert \mathbf{x}-\mathbf{y}\right\Vert ^{2}+\phi(\mathbf{x}).\label{eq:surrogate}
\end{equation}
The difference between these methods is that in FISTA we have $\mathbf{x}_{k}=\mathbf{z}_{k}$,
while MFISTA uses $\mathbf{x}_{k}=\argmin_{\mathbf{x}\in\left\{ \mathbf{z}_{k},\mathbf{x}_{k-1}\right\} }\Psi(\mathbf{x})$.
The MFISTA-VA of \cite{Zibetti2019a} is also an instance of Algorithm~\ref{PGM_abstract},
in this case with $\eta_{k}\geq1$.

\begin{algorithm}
\caption{FISTA}
\label{algo:FISTA}

\begin{algorithmic}[1]

\STATE{\textbf{input }$L_{0}>0$, $\beta>1$, $\mathbf{x}_{0}\in\mathbb{R}^{n}$,
$N\in\mathbb{N}$}

\STATE{\textbf{set }$t_{1}=1$, $\mathbf{y}_{1}=\mathbf{x}_{0}$}

\STATE{\textbf{for }$k=1,...,N$}

\STATE{~~~~\textbf{set }$L_{k}=L_{k-1}$}

\STATE{~~~~\textbf{while }$\Psi\left(P_{L_{k}}\left(\mathbf{y}_{k}\right)\right)>Q_{L_{k}}\left(P_{L_{k}}\left(\mathbf{y}_{k}\right),\mathbf{y}_{k}\right)$}\label{step:fista_backtracking_loop}

\STATE{~~~~~~~~\textbf{set $L_{k}=\beta L_{k}$}}\label{step:fista_backtracking_increase}

\STATE{~~~~\textbf{set }$\mathbf{x}_{k}=P_{L_{k}}\left(\mathbf{y}_{k}\right)$}

\STATE{~~~~\textbf{set $t_{k+1}=\frac{1+\sqrt{1+4t_{k}^{2}}}{2}$}} 

\STATE{~~~~\textbf{set $\mathbf{y}_{k+1}=\mathbf{x}_{k}+\frac{t_{k}-1}{t_{k+1}}\left(\mathbf{x}_{k}-\mathbf{x}_{k-1}\right)$}} 

\end{algorithmic}
\end{algorithm}

\begin{algorithm}
\caption{MFISTA}
\label{algo:MFISTA}

\begin{algorithmic}[1]

\STATE{\textbf{input }$L_{0}>0$, $\beta>1$, $\mathbf{x}_{0}\in\mathbb{R}^{n}$,
$N\in\mathbb{N}$}

\STATE{\textbf{set }$t_{1}=1$, $\mathbf{y}_{1}=\mathbf{x}_{0}$}

\STATE{\textbf{for }$k=1,...,N$}

\STATE{~~~~\textbf{set }$L_{k}=L_{k-1}$}

\STATE{~~~~\textbf{while }$\Psi\left(P_{L_{k}}\left(\mathbf{y}_{k}\right)\right)>Q_{L_{k}}\left(P_{L_{k}}\left(\mathbf{y}_{k}\right),\mathbf{y}_{k}\right)$}

\STATE{~~~~~~~~\textbf{set $L_{k}=\beta L_{k}$}}

\STATE{~~~~\textbf{set }$\mathbf{z}_{k}=P_{L_{k}}\left(\mathbf{y}_{k}\right)$}

\STATE{~~~~\textbf{set }$\mathbf{x}_{k}=\argmin_{\mathbf{x}\in\left\{ \mathbf{z}_{k},\mathbf{x}_{k-1}\right\} }\Psi(\mathbf{x})$}

\STATE{~~~~\textbf{set $t_{k+1}=\frac{1+\sqrt{1+4t_{k}^{2}}}{2}$}} 

\STATE{~~~~\textbf{set $\mathbf{y}_{k+1}=\mathbf{x}_{k}+\frac{t_{k}-1}{t_{k+1}}\left(\mathbf{x}_{k}-\mathbf{x}_{k-1}\right)+\frac{t_{k}}{t_{k+1}}\left(\mathbf{z}_{k}-\mathbf{x}_{k}\right)$}} 

\end{algorithmic}
\end{algorithm}

Before presenting our methods, we define the following:
\begin{align}
\Delta_{a}(L,\mathbf{y}) & {}:=Q_{L}\left(P_{L}(\mathbf{y}),\mathbf{y}\right)-\Psi\left(P_{L}(\mathbf{y})\right)\nonumber \\
\Delta_{b}(\mathbf{x},\mathbf{y}) & {}:=f(\mathbf{x})-f(\mathbf{y})-\left\langle \nabla f(\mathbf{y}),\mathbf{x}-\mathbf{y}\right\rangle \label{eq:Deltas_definition}\\
\Delta_{c}(L,\mathbf{x},\mathbf{y}) & {}:=\phi(\mathbf{x})-\phi\left(P_{L}(\mathbf{y})\right)-\left\langle -\nabla f(\mathbf{y})-L(P_{L}(\mathbf{y})-\mathbf{y}),\mathbf{x}-P_{L}(\mathbf{y})\right\rangle .\nonumber 
\end{align}
Our proposed Fast Proximal Gradient Method (FPGM) is described in
Algorithm~\ref{algo:FPGM}.

\begin{algorithm}
\caption{FPGM}
\label{algo:FPGM}

\begin{algorithmic}[1]

\STATE{\textbf{input }$t_{1}\geq1$, $L_{0}>0$, $\beta>1$, $\mathbf{x}_{0}\in\mathbb{R}^{n}$,
$K\in\mathbb{N}$, $\overline{\eta}\in[1,\infty)\cap\infty$, $N\in\mathbb{N}$}

\STATE{\textbf{set }$\eta_{0}=\overline{\eta}$}

\STATE{\textbf{set }$\mathbf{y}_{1}=\mathbf{x}_{0}$}

\STATE{\textbf{for }$k=1,...,N$}

\STATE{~~~~\textbf{set }$L_{k}=L_{k-1}$}

\STATE{~~~~\textbf{while }$\Psi\left(P_{L_{k}}\left(\mathbf{y}_{k}\right)\right)>Q_{L_{k}}\left(P_{L_{k}}\left(\mathbf{y}_{k}\right),\mathbf{y}_{k}\right)$}

\STATE{~~~~~~~~\textbf{set $L_{k}=\beta L_{k}$}}

\STATE{~~~~\textbf{set }$\mathbf{x}_{k}=P_{L_{k}}\left(\mathbf{y}_{k}\right)$}

\STATE{~~~~\textbf{set }$\gamma_{k}=1+2\frac{\Delta_{a}\left(L_{k},\mathbf{y}_{k}\right)+\left(1-1/t_{k}\right)\bigl(\Delta_{b}\left(\mathbf{x}_{k-1},\mathbf{y}_{k}\right)+\Delta_{c}\left(L_{k},\mathbf{x}_{k-1},\mathbf{y}_{k}\right)\bigr)}{L_{k}\left\Vert \mathbf{z}_{k}-\mathbf{y}_{k}\right\Vert ^{2}}$}\label{line:gamma_computation}

\STATE{~~~~\textbf{if }$k\leq K$}

\STATE{~~~~~~~~\textbf{set }$\eta_{k}=\min\left\{ \gamma_{k},\overline{\eta}\right\} $}

\STATE{~~~~\textbf{else}}

\STATE{~~~~~~~~\textbf{set }$\eta_{k}=\min\left\{ \gamma_{k},\eta_{k-1}L_{k}/L_{k-1},\overline{\eta}\right\} $}\label{line:eta_FPGM}

\STATE{~~~~\textbf{set $t_{k+1}=\frac{1+\sqrt{1+4t_{k}^{2}}}{2}$}} 

\STATE{~~~~\textbf{set $\mathbf{y}_{k+1}=\mathbf{x}_{k}+\frac{t_{k}-1}{t_{k+1}}\left(\mathbf{x}_{k}-\mathbf{x}_{k-1}\right)+\frac{t_{k}}{t_{k+1}}\left(\eta_{k}-1\right)\left(\mathbf{x}_{k}-\mathbf{y}_{k}\right)$}} 

\end{algorithmic}
\end{algorithm}

Notice how the computation of $\Delta_{a}\left(L_{k,}\mathbf{y}_{k}\right)$,
$\Delta_{b}\left(\mathbf{x}_{k-1},\mathbf{y}_{k}\right)$, and $\Delta_{c}\left(L_{k,}\mathbf{x}_{k-1,}\mathbf{y}_{k}\right)$
uses only gradients and function evaluations that would have already
been computed during the execution of the algorithm, except for the
$\phi$-values in $\Delta_{c}\left(L_{k},\mathbf{x}_{k-1},\mathbf{y}_{k}\right))$.
If this is costly, the user may simply replace $\Delta_{c}\left(L_{k},\mathbf{x}_{k-1},\mathbf{y}_{k}\right)$
computation by zero because $\Delta_{c}\left(L,\mathbf{x},\mathbf{y}\right)\geq0$,
according to~\eqref{eq:Delta_c_nonneg} in the next section, and
the value used for $\eta_{k}$ in Step~\ref{line:eta_FPGM} of Algorithm~\ref{algo:FPGM}
is an upper bound for the valid range of values for $\eta_{k}$ that
ensure convergence, but smaller values can also be used. The monotone
version of the FPGM, called MFPGM is given in Algorithm~\ref{algo:MFPGM}.
The monotone versions of the FOMs have not to date been analyzed through
the algorithm optimization approach of~\cite{drt14,Kim2016c} and
the theory for these methods is known from the analysis of the more
general monotone proximal gradient algorithms.

\begin{algorithm}
\caption{MFPGM}
\label{algo:MFPGM}

\begin{algorithmic}[1]

\STATE{\textbf{input }$t_{1}\geq1$, $L_{0}>0$, $\beta>1$, $\mathbf{x}_{0}\in\mathbb{R}^{n}$,
$K\in\mathbb{N}$, $\overline{\eta}\in[1,\infty)\cap\infty$, $N\in\mathbb{N}$}

\STATE{\textbf{set }$\eta_{0}=\overline{\eta}$}

\STATE{\textbf{set }$\mathbf{y}_{1}=\mathbf{x}_{0}$}

\STATE{\textbf{for }$k=1,...,N$}

\STATE{~~~~\textbf{set }$L_{k}=L_{k-1}$}

\STATE{~~~~\textbf{while }$\Psi\left(P_{L_{k}}(\mathbf{y}_{k})\right)>Q_{L_{k}}\left(P_{L_{k}}(\mathbf{y}_{k}),\mathbf{y}_{k}\right)$}

\STATE{~~~~~~~~\textbf{set $L_{k}=\beta L_{k}$}}

\STATE{~~~~\textbf{set }$\mathbf{z}_{k}=P_{L_{k}}(\mathbf{y}_{k})$}

\STATE{~~~~\textbf{set }$\mathbf{x}_{k}=\argmin_{\mathbf{x}\in\{\mathbf{z}_{k},\mathbf{x}_{k-1}\}}\Psi(\mathbf{x})$}

\STATE{~~~~\textbf{set }$\gamma_{k}=1+2\frac{\Delta_{a}(L_{k},\mathbf{y}_{k})+(1-1/t_{k})\bigl(\Delta_{b}(\mathbf{x}_{k-1},\mathbf{y}_{k})+\Delta_{c}(L_{k},\mathbf{x}_{k-1},\mathbf{y}_{k})\bigr)+\bigl(\Psi(\mathbf{z}_{k})-\Psi(\mathbf{x}_{k})\bigr)}{L_{k}\left\Vert \mathbf{z}_{k}-\mathbf{y}_{k}\right\Vert ^{2}}$}

\STATE{~~~~\textbf{if }$k\leq K$}

\STATE{~~~~~~~~\textbf{set }$\eta_{k}=\min\{\gamma_{k},\overline{\eta}\}$}

\STATE{~~~~\textbf{else}}

\STATE{~~~~~~~~\textbf{set }$\eta_{k}=\min\{\gamma_{k},\eta_{k-1}L_{k}/L_{k-1},\overline{\eta}\}$}

\STATE{~~~~\textbf{set $t_{k+1}=\frac{1+\sqrt{1+4t_{k}^{2}}}{2}$}} 

\STATE{~~~~\textbf{set $\mathbf{y}_{k+1}=\mathbf{x}_{k}+\frac{t_{k}-1}{t_{k+1}}\left(\mathbf{x}_{k}-\mathbf{x}_{k-1}\right)+\frac{t_{k}}{t_{k+1}}\left(\mathbf{z}_{k}-\mathbf{x}_{k}\right)+\frac{t_{k}}{t_{k+1}}\left(\eta_{k}-1\right)\left(\mathbf{z}_{k}-\mathbf{y}_{k}\right)$}} 

\end{algorithmic}
\end{algorithm}

\section{Convergence Analysis\label{sec:Convergence-Analysis}}

We start with an introductory lemma, which will be repeatedly used
in the remaining of the convergence analysis.
\begin{lemma}
\label{lem:key_lemma}Let $\mathbf{x}$, $\mathbf{y}\in\mathbb{R}^{n},$
$\Psi:\mathbb{R}^{n}\to\mathbb{R}$ as in~\eqref{eq:problem}, $P_{L}:\mathbb{R}^{n}\to\mathbb{R}^{n}$
as in~\eqref{eq:prox_grad}, and $0<L\in\mathbb{R}$. Then
\begin{equation}
\Psi(\mathbf{x})-\Psi(P_{L}(\mathbf{y}))=\frac{L}{2}\left\Vert P_{L}(\mathbf{y})-\mathbf{y}\right\Vert ^{2}+L\left\langle P_{L}(\mathbf{y})-\mathbf{y},\mathbf{y}-\mathbf{x}\right\rangle +\Delta(L,\mathbf{x},\mathbf{y})
\end{equation}
with
\begin{equation}
\Delta(L,\mathbf{x},\mathbf{y}):=\Delta_{a}(L,\mathbf{y})+\Delta_{b}(\mathbf{x},\mathbf{y})+\Delta_{c}(L,\mathbf{x},\mathbf{y}),\label{eq:Delta_definition}
\end{equation}
where $\Delta_{a}(L,\mathbf{y})$, $\Delta_{b}(\mathbf{x},\mathbf{y})$,
and $\Delta_{c}(L,\mathbf{x},\mathbf{y})$ are defined in~\eqref{eq:Deltas_definition}.\end{lemma}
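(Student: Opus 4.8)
The plan is to recognize that this statement is a pure algebraic identity: it requires neither the convexity of $f$ and $\phi$ nor any optimality property of the proximal point, only the definitions of $Q_L$ in~\eqref{eq:surrogate}, of $\Delta_a,\Delta_b,\Delta_c$ in~\eqref{eq:Deltas_definition}, and of $\Psi=f+\phi$. Accordingly, I would fix $\mathbf{x}$, $\mathbf{y}$ and $L$, abbreviate $\mathbf{p}:=P_L(\mathbf{y})$, and simply expand the three $\Delta$ terms and collect, then substitute into the claimed right-hand side.

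First I would expand $\Delta_a(L,\mathbf{y})=Q_L(\mathbf{p},\mathbf{y})-\Psi(\mathbf{p})$ using~\eqref{eq:surrogate}. The two $\phi(\mathbf{p})$ contributions cancel (one from $Q_L$, one from $\Psi(\mathbf{p})=f(\mathbf{p})+\phi(\mathbf{p})$), leaving
\begin{equation*}
\Delta_a=f(\mathbf{y})+\left\langle\nabla f(\mathbf{y}),\mathbf{p}-\mathbf{y}\right\rangle+\tfrac{L}{2}\|\mathbf{p}-\mathbf{y}\|^2-f(\mathbf{p}).
\end{equation*}
Adding $\Delta_b(\mathbf{x},\mathbf{y})$ the $f(\mathbf{y})$ terms cancel and the two $\nabla f(\mathbf{y})$ inner products merge into $\langle\nabla f(\mathbf{y}),\mathbf{p}-\mathbf{x}\rangle$, so that $\Delta_a+\Delta_b=\langle\nabla f(\mathbf{y}),\mathbf{p}-\mathbf{x}\rangle+\tfrac{L}{2}\|\mathbf{p}-\mathbf{y}\|^2+f(\mathbf{x})-f(\mathbf{p})$.

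Next I would add $\Delta_c(L,\mathbf{x},\mathbf{y})$, which after cancelling the outer and inner minus signs reads $\phi(\mathbf{x})-\phi(\mathbf{p})+\langle\nabla f(\mathbf{y})+L(\mathbf{p}-\mathbf{y}),\mathbf{x}-\mathbf{p}\rangle$. The $\langle\nabla f(\mathbf{y}),\cdot\rangle$ contributions cancel against $\langle\nabla f(\mathbf{y}),\mathbf{p}-\mathbf{x}\rangle$, and recombining $f(\mathbf{x})+\phi(\mathbf{x})=\Psi(\mathbf{x})$ together with $f(\mathbf{p})+\phi(\mathbf{p})=\Psi(\mathbf{p})$ yields
\begin{equation*}
\Delta=\Psi(\mathbf{x})-\Psi(\mathbf{p})+\tfrac{L}{2}\|\mathbf{p}-\mathbf{y}\|^2+L\left\langle\mathbf{p}-\mathbf{y},\mathbf{x}-\mathbf{p}\right\rangle.
\end{equation*}
To finish, I would substitute this into the right-hand side of the asserted identity; the two $\tfrac{L}{2}\|\mathbf{p}-\mathbf{y}\|^2$ terms sum to $L\|\mathbf{p}-\mathbf{y}\|^2$, and the two inner products combine via $\langle\mathbf{p}-\mathbf{y},(\mathbf{y}-\mathbf{x})+(\mathbf{x}-\mathbf{p})\rangle=-\|\mathbf{p}-\mathbf{y}\|^2$, so the quadratic terms cancel and exactly $\Psi(\mathbf{x})-\Psi(\mathbf{p})$ survives, which is the left-hand side.

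The computation carries no genuine obstacle; the only point demanding care is the sign bookkeeping in $\Delta_c$, where the operand $-\nabla f(\mathbf{y})-L(\mathbf{p}-\mathbf{y})$ sits inside an inner product with $\mathbf{x}-\mathbf{p}$, so that the cancellation of the $\nabla f(\mathbf{y})$ terms and the formation of the surviving $L\langle\mathbf{p}-\mathbf{y},\mathbf{x}-\mathbf{p}\rangle$ must be tracked precisely. It is worth emphasizing that nowhere is the defining minimization of $\mathbf{p}=P_L(\mathbf{y})$ invoked: the identity holds verbatim if $\mathbf{p}$ is replaced by any fixed vector throughout $\Delta_a,\Delta_b,\Delta_c$, which clarifies that the lemma is a formal decomposition of the objective decrease rather than an analytic estimate, and explains why it can serve as a common building block for all the convergence bounds that follow.
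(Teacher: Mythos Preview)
Your proof is correct and follows essentially the same route as the paper: both are direct algebraic expansions of $\Delta_a+\Delta_b+\Delta_c$ followed by the elementary identity $\langle\mathbf{p}-\mathbf{y},\mathbf{p}-\mathbf{x}\rangle=\|\mathbf{p}-\mathbf{y}\|^2+\langle\mathbf{p}-\mathbf{y},\mathbf{y}-\mathbf{x}\rangle$. The one notable difference is that the paper introduces the vector $-\nabla f(\mathbf{y})-L(\mathbf{p}-\mathbf{y})$ explicitly as a subgradient $\phi'(\mathbf{p})$ of $\phi$ at $\mathbf{p}=P_L(\mathbf{y})$, using the optimality condition for the proximal map, and records as a byproduct that $\Delta_c\geq 0$; your observation that the \emph{equality} in the lemma requires neither convexity nor the prox optimality is correct and a useful clarification, though the nonnegativity of $\Delta_c$ (and of $\Delta_b$) is precisely what the paper exploits downstream to pass from~\eqref{eq:convergence_condition_eta} to the verifiable bound~\eqref{eq:eta_practical_bound}.
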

\begin{proof}
Notice that, because of~\eqref{eq:surrogate} and the first definition
in~\eqref{eq:Deltas_definition}, we have 
\begin{equation}
\Psi\left(P_{L}(\mathbf{y})\right)=f(\mathbf{y})+\left\langle \nabla f(\mathbf{y}),P_{L}(\mathbf{y})-\mathbf{y}\right\rangle +\frac{L}{2}\left\Vert P_{L}(\mathbf{y})-\mathbf{y}\right\Vert ^{2}+\phi\left(P_{L}(\mathbf{y})\right)-\Delta_{a}(L,\mathbf{y}),
\end{equation}
Therefore,
\begin{multline}
\Psi\left(P_{L}(\mathbf{y})\right)=f(\mathbf{x})+\left\langle \nabla f(\mathbf{y})+\phi'\left(P_{L}(\mathbf{y})\right),P_{L}(\mathbf{y})-\mathbf{x}\right\rangle +\frac{L}{2}\left\Vert P_{L}(\mathbf{y})-\mathbf{y}\right\Vert ^{2}+{}\\
\phi\left(\mathbf{x}\right)-\Delta_{a}(L,\mathbf{y})-\Delta_{b}(\mathbf{x},\mathbf{y})-{}\\
\left(\phi(\mathbf{x})-\phi\left(P_{L}(\mathbf{y})\right)-\left\langle \phi'\left(P_{L}(\mathbf{y})\right),\mathbf{x}-P_{L}(\mathbf{y})\right\rangle \right),
\end{multline}
where $\phi'\left(P_{L}(\mathbf{y})\right)=-L(P_{L}(\mathbf{y})-\mathbf{y})-\nabla f(\mathbf{y})$
is a subgradient of $\phi$ at $P_{L}(\mathbf{y})$ according to the
necessary and sufficient optimality conditions for~\eqref{eq:prox_grad}
(we refer the reader to~\cite{hil93} for the definition and properties
of a subgradient of a convex function, and also for optimality conditions
for unconstrained convex minimization problems). Then
\begin{equation}
0\leq\phi(\mathbf{x})-\phi\left(P_{L}(\mathbf{y})\right)-\left\langle \phi'\left(P_{L}(\mathbf{y})\right),\mathbf{x}-P_{L}(\mathbf{y})\right\rangle =\Delta_{c}(L,\mathbf{x},\mathbf{y})\label{eq:Delta_c_nonneg}
\end{equation}
and we have
\begin{multline}
\Psi\left(P_{L}(\mathbf{y})\right)=f(\mathbf{x})+\left\langle \nabla f(\mathbf{y})+\phi'\left(P_{L}(\mathbf{y})\right),P_{L}(\mathbf{y})-\mathbf{x}\right\rangle +\frac{L}{2}\left\Vert P_{L}(\mathbf{y})-\mathbf{y}\right\Vert ^{2}+{}\\
\phi\left(\mathbf{x}\right)-\Delta_{a}(L,\mathbf{y})-\Delta_{b}(\mathbf{x},\mathbf{y})-\Delta_{c}(L,\mathbf{x},\mathbf{y}).
\end{multline}
Rearranging we get
\begin{equation}
\Psi(\mathbf{x})-\Psi\left(P_{L}(\mathbf{y})\right)=\left\langle L(P_{L}(\mathbf{y})-\mathbf{y}),P_{L}(\mathbf{y})-\mathbf{x}\right\rangle -\frac{L}{2}\left\Vert P_{L}(\mathbf{y})-\mathbf{y}\right\Vert ^{2}+\Delta(L,\mathbf{x},\mathbf{y}),
\end{equation}
and then 
\begin{equation}
\Psi(\mathbf{x})-\Psi\left(P_{L}(\mathbf{y})\right)=L\left\langle P_{L}(\mathbf{y})-\mathbf{y},\mathbf{y}-\mathbf{x}\right\rangle +\frac{L}{2}\left\Vert P_{L}(\mathbf{y})-\mathbf{y}\right\Vert ^{2}+\Delta(L,\mathbf{x},\mathbf{y}).
\end{equation}

\end{proof}

The next result is our main convergence theorem. It will be proven
under the general condition~\eqref{eq:convergence_condition} on
the parameters $L_{k}$ and $\eta_{k}$. This condition may involve
the optimal point and is likely not verifiable in practice. Later
we will show how to guarantee validity of~\eqref{eq:convergence_condition}
through more concrete conditions that can be verified in a practical
setting.
\begin{theorem}
Fix $\mathbf{x}^{*}\in\mathbb{R}^{n}$ and let $\Delta(L,\mathbf{x},\mathbf{y})$
be as in~\eqref{eq:Delta_definition}, then Algorithm \ref{PGM_abstract}
under conditions\label{lem:convergence}
\begin{multline}
(\eta_{k}-1)t_{k}^{2}\left\Vert \mathbf{z}_{k}-\mathbf{y}_{k}\right\Vert ^{2}\leq\frac{2}{L_{k}}\Bigl(t_{k}(t_{k}-1)\Delta(L_{k},\mathbf{x}_{k-1},\mathbf{y}_{k})+{}\\
t_{k}\Delta(L_{k},\mathbf{x}^{*},\mathbf{y}_{k})+t_{k}^{2}\bigl(\Psi(\mathbf{z}_{k})-\Psi(\mathbf{x}_{k})\bigr)\Bigr)\label{eq:convergence_condition}
\end{multline}
or, equivalently,
\begin{multline}
\eta_{k}\leq1+\frac{2}{L_{k}\left\Vert \mathbf{z}_{k}-\mathbf{y}_{k}\right\Vert ^{2}}\Bigl((1-1/t_{k})\Delta(L_{k},\mathbf{x}_{k-1},\mathbf{y}_{k})+{}\\
(1/t_{k})\Delta(L_{k},\mathbf{x}^{*},\mathbf{y}_{k})+\bigl(\Psi(\mathbf{z}_{k})-\Psi(\mathbf{x}_{k})\bigr)\Bigr)\label{eq:convergence_condition_eta}
\end{multline}
and
\begin{equation}
\frac{\eta_{k+1}}{L_{k+1}}\leq\frac{\eta_{k}}{L_{k}}\label{eq:decreasing_steps}
\end{equation}
satisfies
\begin{multline}
\Psi(\mathbf{x}_{k})-\Psi(\mathbf{x}^{*})\leq{}\\
\dfrac{2L_{k}}{\eta_{k}\left(k+2t_{1}-1\right)^{2}}\biggl(\frac{2\eta_{1}t_{1}^{2}\left(\Psi(\mathbf{x}_{1})-\Psi(\mathbf{x}^{*})\right)}{L_{1}}+{}\\
\|\eta_{1}t_{1}\left(\mathbf{z}_{1}-\mathbf{y}_{1}\right)+t_{1}\mathbf{y}_{1}-(t_{1}-1)\mathbf{x}_{0}-\mathbf{x}^{*}\|^{2}\biggr).\label{eq:bound_ugly}
\end{multline}
\end{theorem}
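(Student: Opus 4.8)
The plan is to run a Lyapunov (potential-function) argument in the spirit of the Beck--Teboulle analysis of FISTA, but with the potential rescaled by the running factor $\eta_k/L_k$ so that the variable step sizes and overrelaxation parameters are absorbed. Guided by the right-hand side of~\eqref{eq:bound_ugly}, I would set
\[
\mathbf{u}_k := \eta_k t_k(\mathbf{z}_k-\mathbf{y}_k) + t_k\mathbf{y}_k - (t_k-1)\mathbf{x}_{k-1} - \mathbf{x}^*, \qquad E_k := \frac{2\eta_k t_k^2}{L_k}\bigl(\Psi(\mathbf{x}_k)-\Psi(\mathbf{x}^*)\bigr) + \|\mathbf{u}_k\|^2,
\]
and aim to prove the one-step inequality $E_k\le E_{k-1}$ for $k\ge 2$, so that $E_k\le E_1$ by telescoping. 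Discarding the nonnegative term $\|\mathbf{u}_k\|^2$ then gives $\tfrac{2\eta_k t_k^2}{L_k}\bigl(\Psi(\mathbf{x}_k)-\Psi(\mathbf{x}^*)\bigr)\le E_1$, and the stated bound follows once $t_k^2$ is bounded below, since $E_1$ is exactly the bracketed quantity in~\eqref{eq:bound_ugly}.

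The first technical step is a purely algebraic reduction of the update for $\mathbf{y}_{k+1}$ in Algorithm~\ref{PGM_abstract}. Writing $\mathbf{a}_k := t_k\mathbf{y}_k-(t_k-1)\mathbf{x}_{k-1}-\mathbf{x}^*$, so that $\mathbf{u}_k=\mathbf{a}_k+\eta_k t_k(\mathbf{z}_k-\mathbf{y}_k)$, I would substitute the $\mathbf{y}_{k+1}$ update into $\mathbf{a}_{k+1}$ and simplify to the clean recursion $\mathbf{a}_{k+1}=\mathbf{u}_k$, equivalently $\mathbf{u}_{k}=\mathbf{u}_{k-1}+\eta_k t_k(\mathbf{z}_k-\mathbf{y}_k)$. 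This yields at once $\|\mathbf{u}_k\|^2-\|\mathbf{u}_{k-1}\|^2 = 2\eta_k t_k\langle \mathbf{z}_k-\mathbf{y}_k,\mathbf{a}_k\rangle + \eta_k^2 t_k^2\|\mathbf{z}_k-\mathbf{y}_k\|^2$. The second step supplies a formula for the inner product appearing here: I would apply Lemma~\ref{lem:key_lemma} with $\mathbf{y}=\mathbf{y}_k$ and $L=L_k$ (so $P_{L_k}(\mathbf{y}_k)=\mathbf{z}_k$), once at $\mathbf{x}=\mathbf{x}_{k-1}$ and once at $\mathbf{x}=\mathbf{x}^*$, and form $(t_k-1)$ times the first plus the second. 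The momentum weights are designed precisely so that the two linear terms combine into $L_k\langle \mathbf{z}_k-\mathbf{y}_k,\mathbf{a}_k\rangle$, giving
\[
(t_k-1)\bigl(\Psi(\mathbf{x}_{k-1})-\Psi(\mathbf{x}^*)\bigr)-t_k\bigl(\Psi(\mathbf{z}_k)-\Psi(\mathbf{x}^*)\bigr) = \tfrac{L_k t_k}{2}\|\mathbf{z}_k-\mathbf{y}_k\|^2 + L_k\langle \mathbf{z}_k-\mathbf{y}_k,\mathbf{a}_k\rangle + (t_k-1)\Delta(L_k,\mathbf{x}_{k-1},\mathbf{y}_k)+\Delta(L_k,\mathbf{x}^*,\mathbf{y}_k).
\]

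With these two ingredients the potential difference can be assembled. Substituting the inner-product formula into $\|\mathbf{u}_k\|^2-\|\mathbf{u}_{k-1}\|^2$, using the defining recursion in the equivalent form $t_k(t_k-1)=t_{k-1}^2$ and the splitting $\Psi(\mathbf{z}_k)-\Psi(\mathbf{x}^*)=\bigl(\Psi(\mathbf{x}_k)-\Psi(\mathbf{x}^*)\bigr)+\bigl(\Psi(\mathbf{z}_k)-\Psi(\mathbf{x}_k)\bigr)$, I expect everything to reorganize into
\[
E_k-E_{k-1}=\Bigl(\tfrac{2\eta_k t_{k-1}^2}{L_k}-\tfrac{2\eta_{k-1}t_{k-1}^2}{L_{k-1}}\Bigr)\bigl(\Psi(\mathbf{x}_{k-1})-\Psi(\mathbf{x}^*)\bigr) + \eta_k R_k,
\]
where $R_k$ is exactly the quantity that~\eqref{eq:convergence_condition}, after dividing through by $\eta_k>0$, asserts to be nonpositive. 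The first term is nonpositive by~\eqref{eq:decreasing_steps} together with $\Psi(\mathbf{x}_{k-1})\ge\Psi(\mathbf{x}^*)$ (which holds when $\mathbf{x}^*$ minimizes $\Psi$), and the second is nonpositive by~\eqref{eq:convergence_condition}; hence $E_k\le E_{k-1}$. Closing out is routine: from $t_1\ge 1$ the recursion gives $t_{k+1}\ge t_k+\tfrac12$, so $t_k\ge (k+2t_1-1)/2$ and $t_k^2\ge (k+2t_1-1)^2/4$, and combining this with $E_k\le E_1$ produces precisely~\eqref{eq:bound_ugly}. I expect the main obstacle to be the bookkeeping in the third step, where the inner-product substitution, the index shift $\mathbf{a}_k=\mathbf{u}_{k-1}$, the $t_k$-recursion, and the splitting of $\Psi(\mathbf{z}_k)$ must all combine so that the leftover terms match the convergence condition with no residue; the role of each hypothesis becomes visible only after this cancellation, so the delicate part is verifying that the momentum weights in the $\mathbf{y}$-update were chosen to make exactly this happen.
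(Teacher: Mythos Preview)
Your proposal is correct and follows essentially the same route as the paper's proof: both define the same auxiliary vector $\mathbf{u}_k$, exploit the identity $t_{k+1}\mathbf{y}_{k+1}-(t_{k+1}-1)\mathbf{x}_k-\mathbf{x}^*=\mathbf{u}_k$ (your $\mathbf{a}_{k+1}=\mathbf{u}_k$), combine two applications of Lemma~\ref{lem:key_lemma} with weights proportional to $t_k-1$ and $1$, use $t_k(t_k-1)=t_{k-1}^2$, and telescope using $t_k\ge (k+2t_1-1)/2$. The only cosmetic difference is that the paper introduces the $\mathbf{u}_k$-norms via the polarization identity $\langle\mathbf{x},\mathbf{y}\rangle=\tfrac{1}{2\eta}(\|\eta\mathbf{x}+\mathbf{y}\|^2-\|\eta\mathbf{x}\|^2-\|\mathbf{y}\|^2)$ and then telescopes the resulting inequality, whereas you package the same quantities into the potential $E_k$ from the outset and expand $\|\mathbf{u}_k\|^2-\|\mathbf{u}_{k-1}\|^2$ directly; your remark that $\Psi(\mathbf{x}_{k-1})\ge\Psi(\mathbf{x}^*)$ is needed matches the paper's implicit use of $d_k\ge 0$ when passing from $\tfrac{2\eta_{k+1}}{L_{k+1}}t_k^2 d_k$ to $\tfrac{2\eta_k}{L_k}t_k^2 d_k$.
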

\begin{proof}
Using Lemma~\ref{lem:key_lemma} with $L=L_{k+1}$, $\mathbf{x}=\mathbf{x}_{k}$,
and $\mathbf{y}=\mathbf{y}_{k+1}$, we obtain, because $\mathbf{z}_{k+1}=P_{L_{k+1}}(\mathbf{y}_{k+1})$,
\begin{align}
\Psi(\mathbf{x}_{k})-\Psi(\mathbf{x}_{k+1}) & {}=\Psi(\mathbf{x}_{k})-\Psi(\mathbf{z}_{k+1})+\bigl(\Psi(\mathbf{z}_{k+1})-\Psi(\mathbf{x}_{k+1})\bigr)\nonumber \\
{} & =\frac{L_{k+1}}{2}\left\Vert \mathbf{z}_{k+1}-\mathbf{y}_{k+1}\right\Vert ^{2}+{}\nonumber \\
 & \qquad\qquad L_{k+1}\left\langle \mathbf{z}_{k+1}-\mathbf{y}_{k+1},\mathbf{y}_{k+1}-\mathbf{x}_{k}\right\rangle +{}\nonumber \\
 & \qquad\qquad\qquad\quad\Delta(L_{k+1},\mathbf{x}_{k},\mathbf{y}_{k+1})+\bigl(\Psi(\mathbf{z}_{k+1})-\Psi(\mathbf{x}_{k+1})\bigr).
\end{align}
Therefore,
\begin{multline}
\frac{2}{L_{k+1}}\left(d_{k}-d_{k+1}\right)=\left\Vert \mathbf{z}_{k+1}-\mathbf{y}_{k+1}\right\Vert ^{2}+2\left\langle \mathbf{z}_{k+1}-\mathbf{y}_{k+1},\mathbf{y}_{k+1}-\mathbf{x}_{k}\right\rangle +{}\\
\frac{2}{L_{k+1}}\Bigl(\Delta(L_{k+1},\mathbf{x}_{k},\mathbf{y}_{k+1})+\bigl(\Psi(\mathbf{z}_{k+1})-\Psi(\mathbf{x}_{k+1})\bigr)\Bigr),
\end{multline}
where $d_{k}=\Psi(\mathbf{x}_{k})-\Psi(\mathbf{x}^{*})$. Using Lemma~\ref{lem:key_lemma}
once more in the same manner, but now with $\mathbf{x}=\mathbf{x}^{*}$,
we have
\begin{multline}
\frac{2}{L_{k+1}}\left(-d_{k+1}\right)=\left\Vert \mathbf{z}_{k+1}-\mathbf{y}_{k+1}\right\Vert ^{2}+2\left\langle \mathbf{z}_{k+1}-\mathbf{y}_{k+1},\mathbf{y}_{k+1}-\mathbf{x}^{*}\right\rangle +{}\\
\frac{2}{L_{k+1}}\Bigl(\Delta(L_{k+1},\mathbf{x}^{*},\mathbf{y}_{k+1})+\bigl(\Psi(\mathbf{z}_{k+1})-\Psi(\mathbf{x}_{k+1})\bigr)\Bigr)
\end{multline}
Multiplying the above equations by $t_{k+1}(t_{k+1}-1)$ and $t_{k+1}$,
respectively, and adding the results we get

\begin{multline}
\frac{2}{L_{k+1}}\left(t_{k+1}(t_{k+1}-1)d_{k}-t_{k+1}^{2}d_{k+1}\right)=t_{k+1}^{2}\left\Vert \mathbf{z}_{k+1}-\mathbf{y}_{k+1}\right\Vert ^{2}+{}\\
2\left\langle t_{k+1}\left(\mathbf{z}_{k+1}-\mathbf{y}_{k+1}\right),t_{k+1}\mathbf{y}_{k+1}-(t_{k+1}-1)\mathbf{x}_{k}-\mathbf{x}^{*}\right\rangle +{}\\
\frac{2}{L_{k+1}}\Bigl(t_{k+1}(t_{k+1}-1)\Delta(L_{k+1},\mathbf{x}_{k},\mathbf{y}_{k+1})+{}\\
t_{k+1}\Delta(L_{k+1},\mathbf{x}^{*},\mathbf{y}_{k+1})+t_{k+1}^{2}\bigl(\Psi(\mathbf{z}_{k+1})-\Psi(\mathbf{x}_{k+1})\bigr)\Bigr).
\end{multline}
Now we use the fact that the algorithm satisfies~$t_{k+1}(t_{k+1}-1)=t_{k}^{2}$,
resulting in

\begin{multline}
\frac{2}{L_{k+1}}\left(t_{k}^{2}d_{k}-t_{k+1}^{2}d_{k+1}\right)=t_{k+1}^{2}\left\Vert \mathbf{z}_{k+1}-\mathbf{y}_{k+1}\right\Vert ^{2}+{}\\
2\left\langle t_{k+1}\left(\mathbf{z}_{k+1}-\mathbf{y}_{k+1}\right),t_{k+1}\mathbf{y}_{k+1}-(t_{k+1}-1)\mathbf{x}_{k}-\mathbf{x}^{*}\right\rangle \\
{}+\frac{2}{L_{k+1}}\Bigl(t_{k+1}(t_{k+1}-1)\Delta(L_{k+1},\mathbf{x}_{k},\mathbf{y}_{k+1})+{}\\
t_{k+1}\Delta(L_{k+1},\mathbf{x}^{*},\mathbf{y}_{k+1})+t_{k+1}^{2}\bigl(\Psi(\mathbf{z}_{k+1})-\Psi(\mathbf{x}_{k+1})\bigr)\Bigr).
\end{multline}
Then we apply the relation
\begin{equation}
\langle\mathbf{x},\mathbf{y}\rangle=\frac{1}{2\eta}(\|\eta\mathbf{x}+\mathbf{y}\|^{2}-\|\eta\mathbf{x}\|^{2}-\|\mathbf{y}\|^{2})\label{eq:inner_2_norms}
\end{equation}
to obtain
\begin{multline}
\frac{2}{L_{k+1}}\left(t_{k}^{2}d_{k}-t_{k+1}^{2}d_{k+1}\right)=t_{k+1}^{2}\left\Vert \mathbf{z}_{k+1}-\mathbf{y}_{k+1}\right\Vert ^{2}\\
{}+\frac{1}{\eta_{k+1}}\biggl\|\eta_{k+1}t_{k+1}\left(\mathbf{z}_{k+1}-\mathbf{y}_{k+1}\right)+t_{k+1}\mathbf{y}_{k+1}-(t_{k+1}-1)\mathbf{x}_{k}-\mathbf{x}^{*}\biggr\|^{2}\\
{}-\frac{1}{\eta_{k+1}}\left\Vert \eta_{k+1}t_{k+1}\left(\mathbf{z}_{k+1}-\mathbf{y}_{k+1}\right)\right\Vert ^{2}-\frac{1}{\eta_{k+1}}\left\Vert t_{k+1}\mathbf{y}_{k+1}-(t_{k+1}-1)\mathbf{x}_{k}-\mathbf{x}^{*}\right\Vert ^{2}\\
{}+\frac{2}{L_{k+1}}\Bigl(t_{k+1}(t_{k+1}-1)\Delta(L_{k+1},\mathbf{x}_{k},\mathbf{y}_{k+1})+{}\\
t_{k+1}\Delta(L_{k+1},\mathbf{x}^{*},\mathbf{y}_{k+1})+t_{k+1}^{2}\bigl(\Psi(\mathbf{z}_{k+1})-\Psi(\mathbf{x}_{k+1})\bigr)\Bigr).\label{eq:main_eq_before_u}
\end{multline}
Let us then denote
\begin{equation}
\mathbf{u}_{k+1}:=\eta_{k+1}t_{k+1}\left(\mathbf{z}_{k+1}-\mathbf{y}_{k+1}\right)+t_{k+1}\mathbf{y}_{k+1}-(t_{k+1}-1)\mathbf{x}_{k}-\mathbf{x}^{*}
\end{equation}
and notice that $\mathbf{y}_{k+1}$ was defined in such a way that
\begin{equation}
\mathbf{u}_{k}=t_{k+1}\mathbf{y}_{k+1}-(t_{k+1}-1)\mathbf{x}_{k}-\mathbf{x}^{*}
\end{equation}
so that we can then rewrite~\eqref{eq:main_eq_before_u} as
\begin{multline}
\frac{2}{L_{k+1}}\left(t_{k}^{2}d_{k}-t_{k+1}^{2}d_{k+1}\right)=(1-\eta_{k+1})t_{k+1}^{2}\left\Vert \mathbf{z}_{k+1}-\mathbf{y}_{k+1}\right\Vert ^{2}+{}\\
\frac{1}{\eta_{k+1}}\left\Vert \mathbf{u}_{k+1}\right\Vert ^{2}-\frac{1}{\eta_{k+1}}\left\Vert \mathbf{u}_{k}\right\Vert ^{2}\\
{}+\frac{2}{L_{k+1}}\Bigl(t_{k+1}(t_{k+1}-1)\Delta(L_{k+1},\mathbf{x}_{k},\mathbf{y}_{k+1})+{}\\
t_{k+1}\Delta(L_{k+1},\mathbf{x}^{*},\mathbf{y}_{k+1})+t_{k+1}^{2}\bigl(\Psi(\mathbf{z}_{k+1})-\Psi(\mathbf{x}_{k+1})\bigr)\Bigr).
\end{multline}
Considering hypothesis~\eqref{eq:convergence_condition} we have
\begin{equation}
\frac{2}{L_{k+1}}\left(t_{k}^{2}d_{k}-t_{k+1}^{2}d_{k+1}\right)\geq\frac{1}{\eta_{k+1}}\left\Vert \mathbf{u}_{k+1}\right\Vert ^{2}-\frac{1}{\eta_{k+1}}\left\Vert \mathbf{u}_{k}\right\Vert ^{2},
\end{equation}
and then
\begin{equation}
\frac{2\eta_{k+1}}{L_{k+1}}\left(t_{k}^{2}d_{k}-t_{k+1}^{2}d_{k+1}\right)\geq\left\Vert \mathbf{u}_{k+1}\right\Vert ^{2}-\left\Vert \mathbf{u}_{k}\right\Vert ^{2}.
\end{equation}
Thus using the hypothesis $\eta_{k}/L_{k}\geq\eta_{k+1}/L_{k+1}$
:

\begin{equation}
\frac{2\eta_{k}t_{k}^{2}d_{k}}{L_{k}}-\frac{2\eta_{k+1}t_{k+1}^{2}d_{k+1}}{L_{k+1}}\geq\left\Vert \mathbf{u}_{k+1}\right\Vert ^{2}-\left\Vert \mathbf{u}_{k}\right\Vert ^{2}.\label{eq:decrease_k}
\end{equation}
Notice that denoting the above inequality as $a_{k}-a_{k+1}\ge b_{k+1}-b_{k}$
we can rearrange to \textbf{$a_{k+1}+b_{k+1}\leq a_{k}+b_{k}$} which,
because the sequences $\{a_{k}\}$ and $\{b_{k}\}$ are non-negative,
leads us to $a_{k}\leq a_{1}+b_{1}$, that is,
\begin{equation}
\frac{2\eta_{k}t_{k}^{2}d_{k}}{L_{k}}\leq\frac{2\eta_{1}t_{1}^{2}d_{1}}{L_{1}}+\|\mathbf{u}_{1}\|^{2}.
\end{equation}
That is
\begin{equation}
\Psi(\mathbf{x}_{k})-\Psi(\mathbf{x}^{*})\leq\frac{L_{k}}{2\eta_{k}t_{k}^{2}}\left(\frac{2\eta_{1}t_{1}^{2}d_{1}}{L_{1}}+\|\mathbf{u}_{1}\|^{2}\right).
\end{equation}
By noticing that $t_{k}\geq(k+2t_{1}-1)/2$ we get the desired result.
\end{proof}

Now we discuss practical ways of ensuring that~\eqref{eq:convergence_condition}
holds. A first step towards doing so is to consider Beck and Teboulle's
sufficient decrease criterion, which entails choosing at each iteration
$k$ of the algorithm a parameter $L_{k}$ such that
\begin{equation}
\Psi\left(P_{L_{k}}(\mathbf{y}_{k})\right)\leq Q_{L_{k}}\left(P_{L_{k}}(\mathbf{y}_{k}),\mathbf{y}_{k}\right).\label{eq:decrease_criterion}
\end{equation}
If such a criterion is used, it is straightforward to notice that
$\Delta(L_{k},\mathbf{x},\mathbf{y}_{k})$ is larger than or equal
$0$ for every $\mathbf{x}\in\mathbb{R}^{n}$ and, therefore, for
every $t_{1}\geq1$, condition~\eqref{eq:convergence_condition_eta}
is guaranteed to hold with $\eta_{k}\equiv1$, in which case we recover
the original FISTA when using $\mathbf{x}_{k}=\mathbf{z}_{k}$, as
in Algorithm~\ref{algo:FISTA}, and MFISTA when using $\mathbf{x}_{k}=\argmin_{\mathbf{x}\in\{\mathbf{z}_{k},\mathbf{x}_{k-1}\}}\Psi(\mathbf{x})$,
as in Algorithm~\ref{algo:MFISTA}. We are, however, interested in
the case where $\eta_{k}$ can be made considerably larger than $1$,
because it can yield faster convergence in practice.

The key difficulty in using condition~\eqref{eq:convergence_condition_eta}
is that we would have to be able to compute $\Delta(L_{k},\mathbf{x}^{*},\mathbf{y}_{k})=\Delta_{a}(L_{k},\mathbf{y}_{k})+\Delta_{b}(\mathbf{x}^{*},\mathbf{y}_{k})+\Delta_{c}(L_{k},\mathbf{x}^{*},\mathbf{y}_{k})$
which would involve an unknown optimizer $\mathbf{x}^{*}$. However,
from convexity of $f$ and $\phi,$ we know that $\Delta_{b}(\mathbf{x}^{*},\mathbf{y}_{k})\geq0$
and that $\Delta_{c}(L_{k},\mathbf{x}^{*},\mathbf{y}_{k})\geq0$.
Therefore, the following verifiable condition is sufficient 
\begin{multline}
\eta_{k}\leq1+\frac{2}{L_{k}\left\Vert \mathbf{z}_{k}-\mathbf{y}_{k}\right\Vert ^{2}}\bigl(\Delta_{a}(L_{k},\mathbf{y}_{k})+(1-1/t_{k})\Bigr(\Delta_{b}(\mathbf{x}_{k-1},\mathbf{y}_{k})+{}\\
\Delta_{c}(L_{k},\mathbf{x}_{k-1},\mathbf{y}_{k})\bigr)+\bigl(\Psi(\mathbf{z}_{k})-\Psi(\mathbf{x}_{k})\bigr)\Bigr).\label{eq:eta_practical_bound}
\end{multline}

We next prove the convergence of Algorithm~\ref{algo:GFPGM} below,
which is called General FPGM (GFPGM). The theoretical analysis uses
hypotheses that allow the inclusion as special cases, for example,
FPGM and its monotone version MFPGM, which are respectively Algorithms~\ref{algo:FPGM}
and~\ref{algo:MFPGM} above.

\begin{algorithm}
\caption{GFPGM}
\label{algo:GFPGM}

\begin{algorithmic}[1]

\STATE{\textbf{input }$t_{1}\geq1$, $L_{0}>0$, $\beta>1$, $\mathbf{y}_{1}$,
$\mathbf{x}_{0}\in\mathbb{R}^{n}$, $K\in\mathbb{N}$, $\overline{\eta}\in[1,\infty)\cap\infty$,
$N\in\mathbb{N}$}

\STATE{\textbf{set }$\eta_{0}=\overline{\eta}$}

\STATE{\textbf{for }$k=1,...,N$}

\STATE{~~~~\textbf{set }$L_{k}=L_{k-1}$}

\STATE{~~~~\textbf{while }$\Psi\left(P_{L_{k}}(\mathbf{y}_{k})\right)>Q_{L_{k}}\left(P_{L_{k}}(\mathbf{y}_{k}),\mathbf{y}_{k}\right)$}\label{line:decrease}

\STATE{~~~~~~~~\textbf{set $L_{k}=\beta L_{k}$}}\label{line:proximal_gradient}

\STATE{~~~~\textbf{set }$\mathbf{z}_{k}=P_{L_{k}}(\mathbf{y}_{k})$}

\STATE{~~~~\textbf{set }$\mathbf{x}_{k}\in\{\mathbf{x}:\Psi(\mathbf{x})\leq\Psi(\mathbf{z}_{k})\}$}

\STATE{~~~~\textbf{set }$\gamma_{k}=1+2\frac{\Delta_{a}(L_{k},\mathbf{y}_{k})+(1-1/t_{k})\bigl(\Delta_{b}(\mathbf{x}_{k-1},\mathbf{y}_{k})+\Delta_{c}(L_{k},\mathbf{x}_{k-1},\mathbf{y}_{k})\bigr)+\bigl(\Psi(\mathbf{z}_{k})-\Psi(\mathbf{x}_{k})\bigr)}{L_{k}\left\Vert \mathbf{z}_{k}-\mathbf{y}_{k}\right\Vert ^{2}}$}\label{line:gamma_k}

\STATE{~~~~\textbf{if }$k\leq K$ }

\STATE{~~~~~~~~\textbf{set }$\eta_{k}\in[1,\overline{\eta}]$}

\STATE{~~~~\textbf{else}}

\STATE{~~~~~~~~\textbf{set }$\eta_{k}\leq\min\{\gamma_{k},\eta_{k-1}L_{k}/L_{k-1},\overline{\eta}\}$}\label{line:eta_k}

\STATE{~~~~\textbf{set $t_{k+1}=\frac{1+\sqrt{1+4t_{k}^{2}}}{2}$}} 

\STATE{~~~~\textbf{set $\mathbf{y}_{k+1}=\mathbf{x}_{k}+\frac{t_{k}-1}{t_{k+1}}\left(\mathbf{x}_{k}-\mathbf{x}_{k-1}\right)+\frac{t_{k}}{t_{k+1}}\left(\mathbf{z}_{k}-\mathbf{x}_{k}\right)+\frac{t_{k}}{t_{k+1}}\left(\eta_{k}-1\right)\left(\mathbf{z}_{k}-\mathbf{y}_{k}\right)$}} 

\end{algorithmic}
\end{algorithm}

\begin{corollary}
Let $\{\mathbf{x}_{k}\}\subset\mathbb{R}^{n}$ be the sequence generated
by Algorithm~\ref{algo:GFPGM} with $K=0$, $t_{1}=1$ and $\mathbf{y}_{1}=\mathbf{x}_{0}$.
Fix some $\mathbf{x}^{*}\in\mathbb{R}^{n}$. Then we have
\begin{equation}
\Psi(\mathbf{x}_{k})-\Psi(\mathbf{x}^{*})\leq\frac{2L_{k}\|\mathbf{x}_{k}-\mathbf{x}^{*}\|^{2}}{\eta_{k}(k+1)^{2}}.\label{eq:bound_pretty}
\end{equation}
\label{thm:convergence_paricular}\end{corollary}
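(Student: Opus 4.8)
The plan is to obtain \eqref{eq:bound_pretty} as a specialization of Theorem~\ref{lem:convergence}: its conclusion \eqref{eq:bound_ugly} already has the right shape, so the work is to (i) check that Algorithm~\ref{algo:GFPGM} with $K=0$ satisfies the theorem's hypotheses, and then (ii) collapse the bracketed constant in \eqref{eq:bound_ugly} down to $\|\mathbf{x}_0-\mathbf{x}^*\|^2$ once $t_1=1$ and $\mathbf{y}_1=\mathbf{x}_0$ are imposed. (The displayed right-hand side is most naturally read with $\mathbf{x}_0$ in place of $\mathbf{x}_k$; that is the quantity the argument produces and is the classical FISTA constant.)

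First I would note that Algorithm~\ref{algo:GFPGM} is an instance of the abstract Algorithm~\ref{PGM_abstract}, the $\mathbf{y}_{k+1}$-update being identical, so Theorem~\ref{lem:convergence} applies once \eqref{eq:convergence_condition_eta} and \eqref{eq:decreasing_steps} are verified. With $K=0$ every iteration $k\geq 1$ executes the \textbf{else} branch, giving $\eta_k\leq\min\{\gamma_k,\eta_{k-1}L_k/L_{k-1},\overline{\eta}\}$. The factor $\eta_k\leq\eta_{k-1}L_k/L_{k-1}$ is precisely \eqref{eq:decreasing_steps}. For \eqref{eq:convergence_condition_eta} I would recall that the backtracking loop (Step~\ref{line:decrease}) forces $\Delta_a(L_k,\mathbf{y}_k)\geq 0$, that convexity gives $\Delta_b(\mathbf{x}^*,\mathbf{y}_k)\geq 0$ and $\Delta_c(L_k,\mathbf{x}^*,\mathbf{y}_k)\geq 0$ (the latter being \eqref{eq:Delta_c_nonneg}), and that $\mathbf{x}_k\in\{\mathbf{x}:\Psi(\mathbf{x})\leq\Psi(\mathbf{z}_k)\}$ yields $\Psi(\mathbf{z}_k)-\Psi(\mathbf{x}_k)\geq 0$; hence $\eta_k\leq\gamma_k$ together with the nonnegativity of $\Delta_b(\mathbf{x}^*,\cdot)$ and $\Delta_c(\cdot,\mathbf{x}^*,\cdot)$ implies \eqref{eq:convergence_condition_eta}, exactly as in the passage establishing \eqref{eq:eta_practical_bound}. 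This is routine bookkeeping and delivers \eqref{eq:bound_ugly} for every $k$.

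Next I would substitute $t_1=1$ and $\mathbf{y}_1=\mathbf{x}_0$ into \eqref{eq:bound_ugly}: the exponent becomes $(k+1)^2$ and the $(t_1-1)\mathbf{x}_0$ term disappears, leaving the constant
\[
C:=\frac{2\eta_1}{L_1}\bigl(\Psi(\mathbf{x}_1)-\Psi(\mathbf{x}^*)\bigr)+\bigl\|\eta_1(\mathbf{z}_1-\mathbf{x}_0)+(\mathbf{x}_0-\mathbf{x}^*)\bigr\|^2 .
\]
The crux is to show $C\leq\|\mathbf{x}_0-\mathbf{x}^*\|^2$. For this I would apply Lemma~\ref{lem:key_lemma} with $L=L_1$, $\mathbf{y}=\mathbf{y}_1=\mathbf{x}_0$, $\mathbf{x}=\mathbf{x}^*$ (so that $P_{L_1}(\mathbf{y}_1)=\mathbf{z}_1$) to express $\Psi(\mathbf{z}_1)-\Psi(\mathbf{x}^*)$ via $\tfrac{L_1}{2}\|\mathbf{z}_1-\mathbf{x}_0\|^2$, the cross term $L_1\langle\mathbf{z}_1-\mathbf{x}_0,\mathbf{x}_0-\mathbf{x}^*\rangle$, and $\Delta(L_1,\mathbf{x}^*,\mathbf{x}_0)$. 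Writing $\Psi(\mathbf{x}_1)-\Psi(\mathbf{x}^*)=[\Psi(\mathbf{x}_1)-\Psi(\mathbf{z}_1)]+[\Psi(\mathbf{z}_1)-\Psi(\mathbf{x}^*)]$ and expanding the squared norm, the two $\pm 2\eta_1\langle\mathbf{z}_1-\mathbf{x}_0,\mathbf{x}_0-\mathbf{x}^*\rangle$ contributions cancel; dividing by $\eta_1>0$ then reduces $C\leq\|\mathbf{x}_0-\mathbf{x}^*\|^2$ to
\[
(\eta_1-1)\|\mathbf{z}_1-\mathbf{y}_1\|^2\leq\frac{2}{L_1}\Bigl(\Delta(L_1,\mathbf{x}^*,\mathbf{y}_1)+\bigl(\Psi(\mathbf{z}_1)-\Psi(\mathbf{x}_1)\bigr)\Bigr),
\]
which is exactly the $k=1$ instance of \eqref{eq:convergence_condition} (the $t_1(t_1-1)$ term vanishes since $t_1=1$), already verified in the previous step. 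Finally, $t_k\geq(k+2t_1-1)/2=(k+1)/2$ supplies the $(k+1)^2$ denominator and closes the argument.

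I expect the cancellation computation in the third paragraph to be the only delicate point: one must track signs so that the cross terms annihilate and so that the leftover inequality is recognized as the convergence condition at $k=1$ itself, rather than a strictly stronger statement. Everything else is inherited directly from Theorem~\ref{lem:convergence} and the nonnegativity facts collected around Lemma~\ref{lem:key_lemma}.
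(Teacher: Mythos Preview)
Your proposal is correct and follows essentially the same route as the paper: check the hypotheses of Theorem~\ref{lem:convergence} via the $K=0$ branch of Algorithm~\ref{algo:GFPGM}, then apply Lemma~\ref{lem:key_lemma} at $k=1$ to reduce the bracketed constant in~\eqref{eq:bound_ugly} to $\|\mathbf{x}_0-\mathbf{x}^*\|^2$, the residual inequality being precisely~\eqref{eq:convergence_condition} at $k=1$. The only item the paper adds that you omit is a brief verification that the backtracking loop of Steps~\ref{line:decrease}--\ref{line:proximal_gradient} terminates (so that $L_k$ is well defined and bounded); your observation that the bound naturally carries $\|\mathbf{x}_0-\mathbf{x}^*\|^2$ rather than $\|\mathbf{x}_k-\mathbf{x}^*\|^2$ is exactly what the paper's own argument produces.
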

\begin{proof}
First we observe that the method indeed leaves Steps~\eqref{line:decrease}~and~\eqref{line:proximal_gradient}
of Algorithm~\ref{algo:GFPGM}. Let $\mathcal{L}$ be the Lipschitz
constant of $\nabla f$. Then, as it is well known, for $L\geq\mathcal{L}$,
the condition $Q_{L}(\mathbf{x},\mathbf{y})\geq\Psi(\mathbf{x})$
holds for every pair of vectors $\mathbf{x},$ $\mathbf{y}\in\mathbb{R}^{n}$.
Therefore, at iteration $k$, the loop of Steps~\eqref{line:decrease}~and~\eqref{line:proximal_gradient}
of Algorithm~\ref{algo:GFPGM} will be executed at most $K_{k}$
times where $K_{k}=\left\lceil \frac{\log\frac{\mathcal{L}}{L_{k-1}}}{\log\beta}\right\rceil $,
and where $\lceil x\rceil$ is the smaller integer larger than or
equal $x$. In fact, $\max\{L_{0},\beta\mathcal{L}\}$ is an upper
bound to $L_{k}$.

Notice that Steps~\ref{line:gamma_k}~and~\ref{line:eta_k} of
Algorithm~\ref{algo:GFPGM} ensure that conditions~\eqref{eq:convergence_condition_eta}~and~\eqref{eq:decreasing_steps}
are satisfied, so that Theorem~\ref{lem:convergence} can be applied
in this case leading to
\begin{multline}
\Psi(\mathbf{x}_{k})-\Psi(\mathbf{x}^{*})\leq\dfrac{2L_{k}}{\eta_{k}\left(k+2t_{1}-1\right)^{2}}\biggl(\frac{2\eta_{1}t_{1}^{2}\left(\Psi(\mathbf{x}_{1})-\Psi(\mathbf{x}^{*})\right)}{L_{1}}+{}\\
\|\eta_{1}t_{1}\left(\mathbf{z}_{1}-\mathbf{x}_{0}\right)+\mathbf{x}_{0}-\mathbf{x}^{*}\|^{2}\biggr).\label{eq:bound_aux}
\end{multline}
Now, we use Lemma~\ref{lem:key_lemma} with $L=L_{1}$, $\mathbf{x}=\mathbf{x}^{*}$,
$\mathbf{y}_{1}=\mathbf{x}_{0}$, and $\mathbf{y}=\mathbf{y}_{1}$
followed by~\eqref{eq:inner_2_norms} to obtain
\begin{align}
\Psi(\mathbf{x}_{1})-\Psi(\mathbf{x}^{*}) & {}=\Psi(\mathbf{z}_{1})-\Psi(\mathbf{x}^{*})+\left(\Psi(\mathbf{x}_{1})-\Psi(\mathbf{z}_{1})\right)\nonumber \\
 & {}=-\frac{L_{1}}{2}\|\mathbf{z}_{1}-\mathbf{y}_{1}\|^{2}-L_{1}\left<\mathbf{z}_{1}-\mathbf{y}_{1},\mathbf{y}_{1}-\mathbf{x}^{*}\right>-{}\nonumber \\
 & \qquad\qquad\qquad\qquad\qquad\qquad\quad\Delta(L_{1},\mathbf{x}^{*},\mathbf{y}_{1})+\left(\Psi(\mathbf{x}_{1})-\Psi(\mathbf{z}_{1})\right)\nonumber \\
 & {}=-\frac{L_{1}}{2}\|\mathbf{z}_{1}-\mathbf{x}_{0}\|^{2}-L_{1}\left<\mathbf{z}_{1}-\mathbf{x}_{0},\mathbf{x}_{0}-\mathbf{x}^{*}\right>-{}\nonumber \\
 & \qquad\qquad\qquad\qquad\qquad\qquad\quad\Delta(L_{1},\mathbf{x}^{*},\mathbf{y}_{1})+\left(\Psi(\mathbf{x}_{1})-\Psi(\mathbf{z}_{1})\right).
\end{align}
On the other hand,
\begin{multline}
\left<\mathbf{z}_{1}-\mathbf{x}_{0},\mathbf{x}_{0}-\mathbf{x}^{*}\right>=\frac{1}{2\eta_{1}t_{1}}\bigl(\|\eta_{1}t_{1}(\mathbf{z}_{1}-\mathbf{x}_{0})+\mathbf{x}_{0}-\mathbf{x}^{*}\|^{2}-{}\\
\|\eta_{1}t_{1}(\mathbf{z}_{1}-\mathbf{x}_{0})\|^{2}-\|\mathbf{x}_{0}-\mathbf{x}^{*}\|^{2}\bigr).
\end{multline}
Thus
\begin{align}
\frac{2\eta_{1}t_{1}^{2}\left(\Psi(\mathbf{x}_{1})-\Psi(\mathbf{x}^{*})\right)}{L_{1}} & {}=-\eta_{1}t_{1}^{2}\|\mathbf{z}_{1}-\mathbf{x}_{0}\|^{2}-t_{1}\|\eta_{1}t_{1}(\mathbf{z}_{1}-\mathbf{x}_{0})+\mathbf{x}_{0}-\mathbf{x}^{*}\|^{2}+{}\nonumber \\
 & \qquad\qquad\qquad t_{1}\|\eta_{1}t_{1}(\mathbf{z}_{1}-\mathbf{x}_{0})\|^{2}+t_{1}\|\mathbf{x}_{0}-\mathbf{x}^{*}\|^{2}+{}\nonumber \\
 & \qquad\qquad\qquad\qquad\frac{2\eta_{1}t_{1}^{2}}{L_{1}}\left(\Delta(L_{1},\mathbf{x}^{*},\mathbf{y}_{1})+\left(\Psi(\mathbf{x}_{1})-\Psi(\mathbf{z}_{1})\right)\right)\nonumber \\
 & {}=-\eta_{1}t_{1}^{2}\|\mathbf{z}_{1}-\mathbf{x}_{0}\|^{2}\biggl(1-\eta_{1}t_{1}-{}\nonumber \\
 & \qquad\qquad\qquad\frac{2\left(\Delta(L_{1},\mathbf{x}^{*},\mathbf{y}_{1})+\left(\Psi(\mathbf{x}_{1})-\Psi(\mathbf{z}_{1})\right)\right)}{L_{1}\|\mathbf{z}_{1}-\mathbf{x}_{0}\|^{2}}\biggr)-{}\\
 & \qquad\qquad\quad t_{1}\|\eta_{1}t_{1}(\mathbf{z}_{1}-\mathbf{x}_{0})+\mathbf{x}_{0}-\mathbf{x}^{*}\|^{2}+{}\nonumber \\
 & \qquad\qquad\qquad\qquad\qquad\qquad\qquad\qquad\qquad t_{1}\|\mathbf{x}_{0}-\mathbf{x}^{*}\|^{2}.
\end{align}
We simplify the result adopting the notation
\begin{equation}
\epsilon_{1}:=1-\eta_{1}t_{1}-\frac{2\left(\Delta(L_{1},\mathbf{x}^{*},\mathbf{y}_{1})+\left(\Psi(\mathbf{x}_{1})-\Psi(\mathbf{z}_{1})\right)\right)}{L_{1}\|\mathbf{z}_{1}-\mathbf{x}_{0}\|^{2}}
\end{equation}
finally reaching
\begin{multline}
\frac{2\eta_{1}t_{1}^{2}\left(\Psi(\mathbf{x}_{1})-\Psi(\mathbf{x}^{*})\right)}{L_{1}}+\|\eta_{1}t_{1}\left(\mathbf{z}_{1}-\mathbf{x}_{0}\right)+\mathbf{x}_{0}-\mathbf{x}^{*}\|^{2}={}\\
-\eta_{1}t_{1}^{2}\epsilon_{1}\|\mathbf{z}_{1}-\mathbf{x}_{0}\|^{2}+(1-t_{1})\|\eta_{1}t_{1}(\mathbf{z}_{1}-\mathbf{x}_{0})+\mathbf{x}_{0}-\mathbf{x}^{*}\|^{2}+t_{1}\|\mathbf{x}_{0}-\mathbf{x}^{*}\|^{2}.\label{eq:general_format}
\end{multline}
Now, notice that with $t_{1}=1$, Steps~\ref{line:gamma_k}~and~\ref{line:eta_k}
of Algorithm~\ref{algo:GFPGM} will ensure $\epsilon_{1}\geq0$ and,
therefore,
\begin{equation}
\frac{2\eta_{1}t_{1}^{2}\left(\Psi(\mathbf{x}_{1})-\Psi(\mathbf{x}^{*})\right)}{L_{1}}+\|\eta_{1}t_{1}\left(\mathbf{z}_{1}-\mathbf{x}_{0}\right)+\mathbf{x}_{0}-\mathbf{x}^{*}\|^{2}\leq\|\mathbf{x}_{0}-\mathbf{x}^{*}\|^{2}.
\end{equation}
This put into~\eqref{eq:bound_aux} gives the desired result.
\end{proof}

The main intent of the previous corollary is to ease the comparison
of bound~\eqref{eq:bound_ugly} of Theorem~\ref{lem:convergence}
with current knowledge about fast proximal gradient methods. It shows
that indeed, if it is possible to use $\eta_{k}\equiv2$ in OISTA,
this algorithm will share the same bound on the objective function
gap that has OGM and that $\eta_{k}>2$ may lead to an even better
constant in the convergence bound. Unfortunately, if $t_{1}=1$, it
is possible to show that $\eta_{1}\leq2$, and it can be seen that
$\eta_{1}=2$ is very unlikely to happen. In fact, $\eta_{1}$ can
be constrained this way to be no larger than $1$ if $\mathbf{x}_{1}=\mathbf{z}_{1}$
and $Q_{L_{1}}(\mathbf{z}_{1},\mathbf{y}_{1})=\Psi(\mathbf{z}_{1})$.
Then, if, as it is commonly the case, the step parameter $L_{k}$
never changes, we will permanently have to use $\eta_{k}<2$. On the
other hand, this is not an issue with $K>0$. Furthermore, $O(1/k^{2})$
convergence for Algorithm~\ref{algo:GFPGM} is still valid for $K>0$.
This is so because although Theorem~\ref{lem:convergence} cannot
be applied to Algorithm~\ref{algo:GFPGM} with $K>0$, one can still
apply Theorem~\ref{lem:convergence} to Algorithm~\ref{algo:GFPGM}
with $K=0$, but now restarted with $L_{0}=L_{K}$, $\mathbf{y}_{1}=\mathbf{y}_{K+1}$,
$t_{1}=t_{K+1}$, and $\mathbf{x}_{0}=\mathbf{\mathbf{x}}_{K}$.

\section{Experiments\label{sec:Experiments}}

We will divide the experimental section in two parts. The first of
these parts deals with high-resolution synchrotron-illuminated tomographic
image reconstruction of a biological sample. In this first part we
compare several fast proximal gradient algorithms from the viewpoint
of amount of objective function value reduction per computational
time in order to capture an idea of the relative algorithm performances
regarding numerical optimization speed.

Objective function value alone is not a measure of quality of image
reconstruction. This is why in the second set of experiments we use
simulated data from known mathematical images in order to compare
the reconstructions against a ground truth, which did not exist in
the previous case of the reconstruction of biological samples. The
comparisons use realistic task-oriented numerical figures of merit.
These experiments are repeated and statistical hypothesis testing
is used in order to assess the relevance of the outcome.

\begin{figure}
\includegraphics[width=0.51\columnwidth]{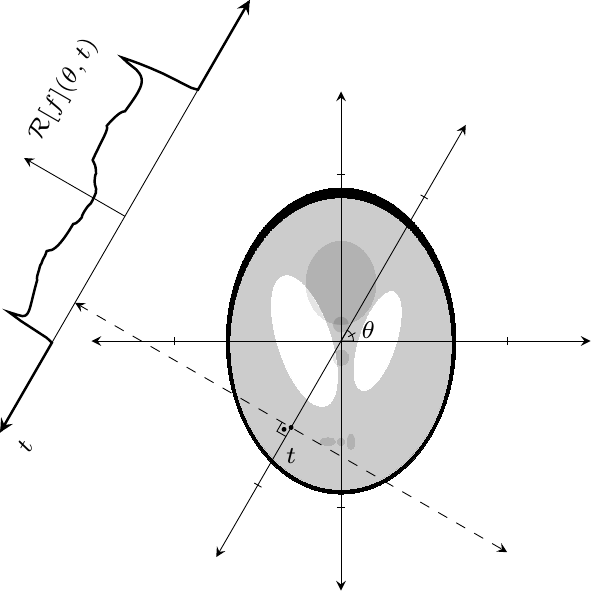}\includegraphics[width=0.49\columnwidth]{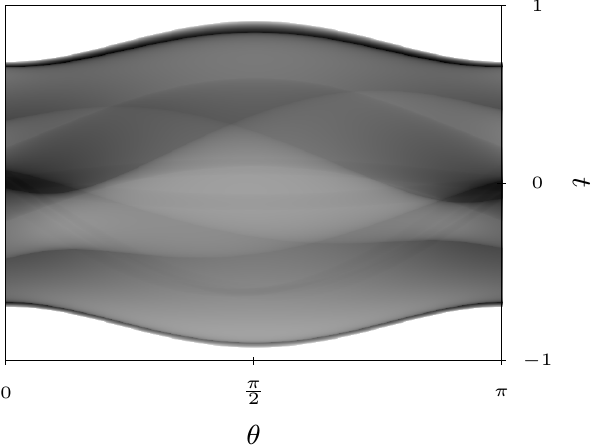}

\caption{Left: integration path for the Radon transform. There, $\theta$ is
the angle between the normal to the integration path and the horizontal
axis, and $t$ is the distance from the line of integration to the
origin. Right: gray-scale image in the $\theta\times t$ coordinate
system of the Radon transform of the image shown on the left.\label{fig:Radon}}
\end{figure}

Before heading to the experimental details, we explain generalities
of the tomographic image reconstruction problem. In tomography, the
goal is to reconstruct an image $\rho:\mathbb{R}^{2}\to\mathbb{R}$
from a finite number of approximate samples of its Radon transform
$\mathcal{R}[\rho]:[0,\pi)\times\mathbb{R}\to\mathbb{R}$, which is
defined as
\begin{equation}
\mathcal{R}\left[\rho\right](\theta,t):=\int_{-\infty}^{\infty}\rho\left(t\binom{\cos\theta}{\sin\theta}+s\binom{-\sin\theta}{\cos\theta}\right)\mathrm{d}s,
\end{equation}
see Figure~\ref{fig:Radon} for an illustration. Assuming that the
image belongs to a linear space spanned by a basis $\{\rho_{1},\rho_{2},\dots,\rho_{n}\}$
and because there is a finite number of samples $b_{i}\approx\mathcal{R}\left[\rho\right](\theta_{i},t_{i})$,
$i\in\{1,2,\dots,m\}$, the problem is then reduced to a linear system
of equations
\begin{equation}
R\mathbf{x}\approx\mathbf{b},
\end{equation}
where the unknowns $\mathbf{x}\in\mathbb{R}^{n}$ are the coefficients
of the linear combination that gives the reconstruction $\rho_{\text{recon}}:=\sum_{j=1}^{n}x_{j}\rho_{j}$.
In practice the meaning of the approximation above has to be made
clear, which can be done by, e.g., optimization. In this setting,
the constrained least squares
\begin{equation}
\mathbf{x}\in\argmin_{\mathbf{y}\in\mathbb{R}_{+}^{n}}\|R\mathbf{y}-\mathbf{b}\|^{2},
\end{equation}
is one among the many possible models. Because the tomographic reconstruction
problem is poorly conditioned, regularization is commonly applied
and the situation can in many cases be interpreted as the minimization
of the sum of a smooth convex function $f$ and a non-smooth convex
function $\phi$ as in~\eqref{eq:problem}, the problem which fast
proximal gradient methods are designed to solve. In theory, because
the Radon transform is a compact linear operator, its inverse is (moderately)
ill-posed~\cite{ehn00}. In practice, this ill-posedness can be problematic
in some circumstances. For example, in acquisitions where the signal
to noise ratio is below ideal such as when lower exposure to radiation
is desired or when the source is less predictable such as in emission
tomography. Beyond these cases, there might be reasons that make a
shorter acquisition time necessary (for example, for 4D tomography),
which might amplify the ill-posedness of the problem because higher
scanning speeds are usually obtained by sampling radially at a lower
rate.

\subsection{Constrained Maximum Likelihood Tomography with Synchrotron Data}

In synchrotron tomography, $\mathcal{R}\left[\rho\right](\theta_{i},t_{i})$
can be estimated by a computation involving $d_{i}$, $\omega_{i}$,
and $p_{i}$, where
\begin{itemize}
\item $d_{i}$ is a measurement of the dark field, which is the expected
number of detected events in the sensor during a scan without active
source and without the object between source and detector;
\item $\omega_{i}$ is a measurement of the flat field, which is the expected
number of detected events in the sensor during a scan with active
source and without the object between source and detector;
\item $p_{i}$ is the photon count, i.e., the number of events in the sensor
during a scan with active source and with the object between source
and detector.
\end{itemize}
Another idea is, instead of estimating $\mathcal{R}\left[\rho\right](\theta_{i},t_{i})$
from dark field, flat field, and photon count data, to use the probabilistically
inspired constrained model for transmission data from~\cite{erf99},
which we describe now. Let
\begin{equation}
f(\mathbf{x})=\sum_{i=1}^{m}h_{i}\left((R\mathbf{x})_{i}\right),
\end{equation}
where
\begin{equation}
h_{i}(b)=\omega_{i}e^{-b}+d_{i}-p_{i}\log(\omega_{i}e^{-b}+d_{i}).
\end{equation}
 Thus, the model consists of solving
\begin{equation}
\min_{\mathbf{x}\in\mathbb{R}_{+}^{n}}\quad f(\mathbf{x}),
\end{equation}
which is equivalent to
\begin{equation}
\min_{\mathbf{x}\in\mathbb{R}^{n}}\quad f(\mathbf{x})+\phi(\mathbf{x}),
\end{equation}
where $\phi(\mathbf{x})=\chi_{\mathbb{R}_{+}^{n}}(\mathbf{x})$, with
the indicator function $\chi_{C}$ of a convex closed set $C$ defined
as
\begin{equation}
\chi_{C}(\mathbf{x})=\begin{cases}
0 & \text{if}\quad\mathbf{x}\in C\\
\infty & \text{otherwise.}
\end{cases}
\end{equation}
The proximal operator of the indicator function is the Euclidean projection:
\begin{equation}
\prox_{\chi_{C}}(\mathbf{x})=\mathcal{P}_{C}(\mathbf{x})=\argmin_{\mathbf{y}\in C}\|\mathbf{x}-\mathbf{y}\|.
\end{equation}

\paragraph*{Data Geometry and Dimensions}

~Data was collected along $m_{r}=2048$ parallel lines on each one
of the $m_{v}=512$ views and the path between emitter and detector
for data point $i$ was parametrized by the pair $(\theta_{i},t_{i})$
for $i\in\left\{ 1,2,\dots,m_{r}m_{v}\right\} $ in the following
manner. First let us define $\kappa_{i}\in\left\{ 1,2,\dots,m_{v}\right\} $
as $\kappa_{i}:=1+\lfloor(i-1)/m_{r}\rfloor$ where $\lfloor x\rfloor$
is the largest integer smaller than or equal $x$ and $\ell_{i}\in\{1,2,\dots,m_{r}\}$
as $\ell_{i}:=1+(i-1)\%m_{r}$ where $a\%b$ is the remainder of the
integer division of $a\in\mathbb{N}$ by $b\in\mathbb{N}$. Then,
for $i\in\left\{ 1,2,\dots,m_{r}m_{v}\right\} $, we have

\begin{equation}
\theta_{i}=-\pi\frac{(\kappa_{i}-1)}{m_{v}-1}\quad\text{and}\quad t_{i}=-1+2\frac{(\ell_{i}-1)}{m_{r}-1}.
\end{equation}
The dimensions of the reconstructed images are $n=2048^{2}$, that
is, $2048\times2048$ pixels.

\begin{figure}
\includegraphics[width=1\columnwidth]{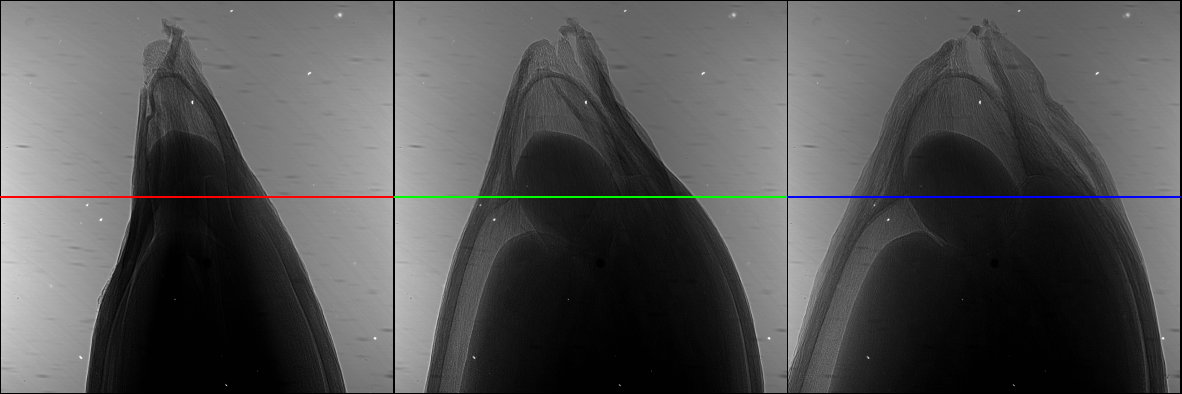}\vspace*{3pt}

\includegraphics[width=1\columnwidth]{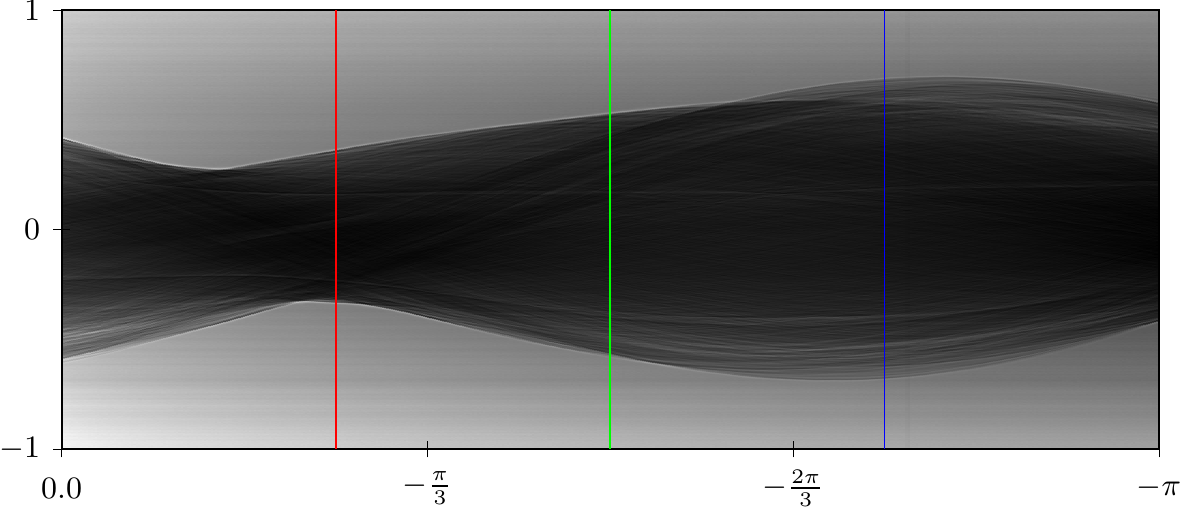}

\caption{Top: three of the $512$ images used in the reconstruction of a slice
of an apple seed. These images depict the number of photons recorded
on each sensor pixel, where brighter indicates more photons recorded.
The solid lines indicate data corresponding to the slice that will
be reconstructed in the experiments. Bottom: photon count data that
will be used for the reconstruction of the slice. Each of the $512$
rows of the data related to the same slice is used as a column in
a representation of the data as samples of the photon counts in the
$\theta\times t$ plane. The color of each of the solid lines identifies
the position where the corresponding data in the top row was used.\label{fig:fin_data}}
\end{figure}

In the transmission tomography setup found at the Brazilian Synchrotron
Light Laboratory, the object to be imaged is positioned between the
synchrotron x-ray source and a scintillator that is digitally imaged
by a CCD camera. Data from the $\kappa$-th readout from this camera
provides a $2048\times2048$ radio-graphic image that includes the
photon counts at pixels of the camera corresponding to integration
lines with parameters $\theta=-\pi(\kappa-1)/\left(m_{v}-1\right)$
and $t\in\left\{ -1+2\frac{(\ell-1)}{m_{r}-1}:\ell\in\left\{ 1,2,\dots,m_{r}\right\} \right\} $
not only for one two-dimensional slice, but simultaneously for $2048$
parallel slices of the imaged volume. Reconstruction of each of the
slices is later performed independently through the two-dimensional
model. Figure~\ref{fig:fin_data} depicts the procedure used to obtain
the photon count data $p_{i}$ that will be used for one slice reconstruction.

\begin{algorithm}
\caption{OISTA}
\label{algo:OISTA}

\begin{algorithmic}[1]

\STATE{\textbf{input }$L_{0}>0$, $\beta>1$, $\mathbf{x}_{0}\in\mathbb{R}^{n}$,
$N\in\mathbb{N}$}

\STATE{\textbf{set }$t_{1}=1$, $\mathbf{y}_{1}=\mathbf{x}_{0}$}

\STATE{\textbf{for }$k=1,...,N$}

\STATE{~~~~\textbf{set }$L_{k}=L_{k-1}$}

\STATE{~~~~\textbf{while }$\Psi\left(P_{L_{k}}(\mathbf{y}_{k})\right)>Q_{L_{k}}\left(P_{L_{k}}(\mathbf{y}_{k}),\mathbf{y}_{k}\right)$}

\STATE{~~~~~~~~\textbf{set $L_{k}=\beta L_{k}$}}

\STATE{~~~~\textbf{set }$\mathbf{x}_{k}=P_{L_{k}}(\mathbf{y}_{k})$}

\STATE{~~~~\textbf{set $t_{k+1}=\frac{1+\sqrt{1+4t_{k}^{2}}}{2}$}} 

\STATE{~~~~\textbf{set $\mathbf{y}_{k+1}=\mathbf{x}_{k}+\frac{t_{k}-1}{t_{k+1}}\left(\mathbf{x}_{k}-\mathbf{x}_{k-1}\right)+\frac{t_{k}}{t_{k+1}}\left(\mathbf{x}_{k}-\mathbf{y}_{k}\right)$}} 

\end{algorithmic}
\end{algorithm}

\paragraph*{Algorithms}

~We have compared several algorithms regarding speed of reduction
of objective function value. The methods experimented with are the
following: FISTA (Algorithm~\ref{algo:FISTA}) and OISTA (Algorithm~\ref{algo:OISTA}),
which we compare against FPGM (Algorithm~\ref{algo:FPGM}). In every
case, the starting image $\mathbf{x}_{0}$ was set to an image such
that all of its pixels have the same value and $\sum_{i=1}^{m}(R\mathbf{x}_{0})_{i}=\sum_{i=1}^{m}\tilde{b}_{i}$,
where $\tilde{b}_{i}$ are the estimates of the Radon Transform given
by $\tilde{b}_{i}=\ln\frac{\omega_{i}-d_{i}}{p_{i}-d_{i}}$. The parameter
$\beta$ was set to $2$ for all algorithms. This seemed a reasonable
choice and we did not feel the need to experiment with this parameter.
A first comparison uses $K=10$ and $\overline{\eta}=\infty$ for
FPGM and compares it to FISTA and OISTA. A second experiment considers
only FPGM, but with three different set of parameters $K$ and $\overline{\eta}$,
namely $(K,\overline{\eta})\in\{(10,\infty),(10,2),(\infty,\infty)\}$.
We denote the FPGM variations as FPGM$_{(K,\overline{\eta})}$ with
$(K,\overline{\eta})\in\{(10,\infty),(10,2),(\infty,\infty)\}$. The
reason why the value $\overline{\eta}=2$ was singled out among all
the possible finite values for this parameter is that when $\eta_{k}=2$
the FPGM iteration is identical to the one of OISTA, for which no
convergence proof exists. This way we show that our theoretical results
can explain why OISTA converges in some cases, while at the same time
they give insights on when and why OISTA might not converge. Notice
that the theoretical results presented above do not guarantee convergence
of FPGM$_{(\infty,\infty)}$ because it may not satisfy~\eqref{eq:decreasing_steps}.
In fact, Figure~\ref{fig:fpgm_params} shows what seems to be an
example of the algorithm not converging to an optimizer.

\paragraph*{Computational Issues}

~Because of the large data and image size, the matrix-vector products
$R\mathbf{x}$ and $R^{T}\mathbf{b}$ were implemented in a parallel
fashion and the computations were carried out by a Graphics Processing
Unit (GPU) using C++ CUDA in order to obtain low running times for
the reconstruction and thus to enable experimentation with a wide
range of parameters and for a very large number of iterations. A Python
wrapper for the GPU routines was used in order to implement all the
other algorithm parts using the NumPy library. We have used the ray-tracing
method from~\cite{hly99} for the on-the-fly computation of these
matrix-vector products. All other computations were accomplished by
the CPU. The computer had a GeForce Titan Xp GPU with 12GB of dedicated
RAM and a Ryzen 5 2600 CPU with 16GB of available RAM. The algorithms
are also practical when executed in less powerful hardware, as Figure~\ref{fig:convergence_plots-time-cpu}
shows. In practice far less than $1000$ iterations will be executed
and the algorithm will be executed with a single set of parameters,
which means that even modest CPUs such as the one used suffice for
routine reconstruction. The single precision GPU implementation runs
at around $12$ times faster than the double precision CPU implementation.

\begin{figure}
\includegraphics[width=1\columnwidth]{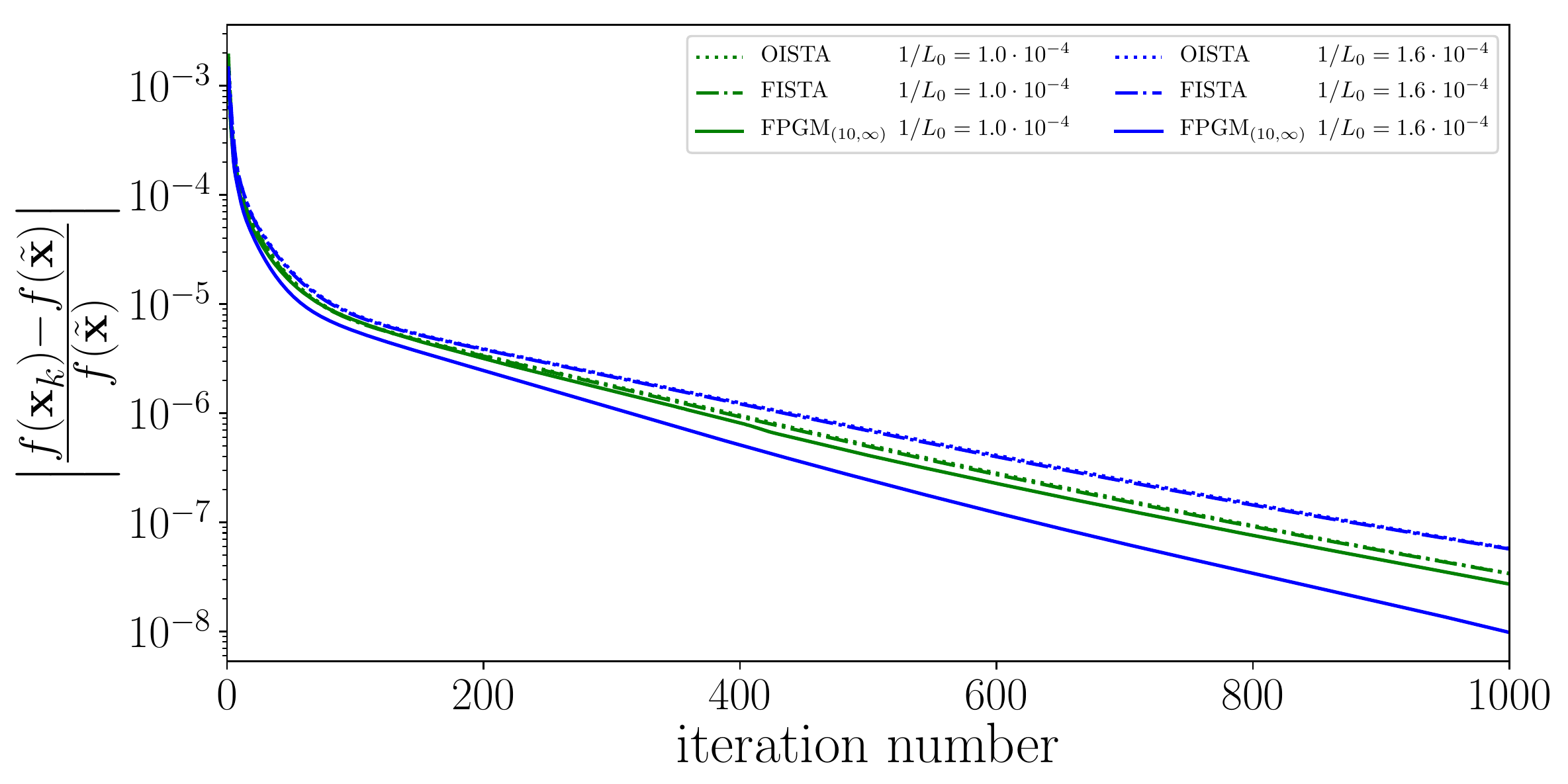}

\caption{Plot of objective function value at each iteration for two different
starting step-size values in three different algorithms.\label{fig:convergence_plots}}
\end{figure}

\begin{figure}
\includegraphics[width=1\columnwidth]{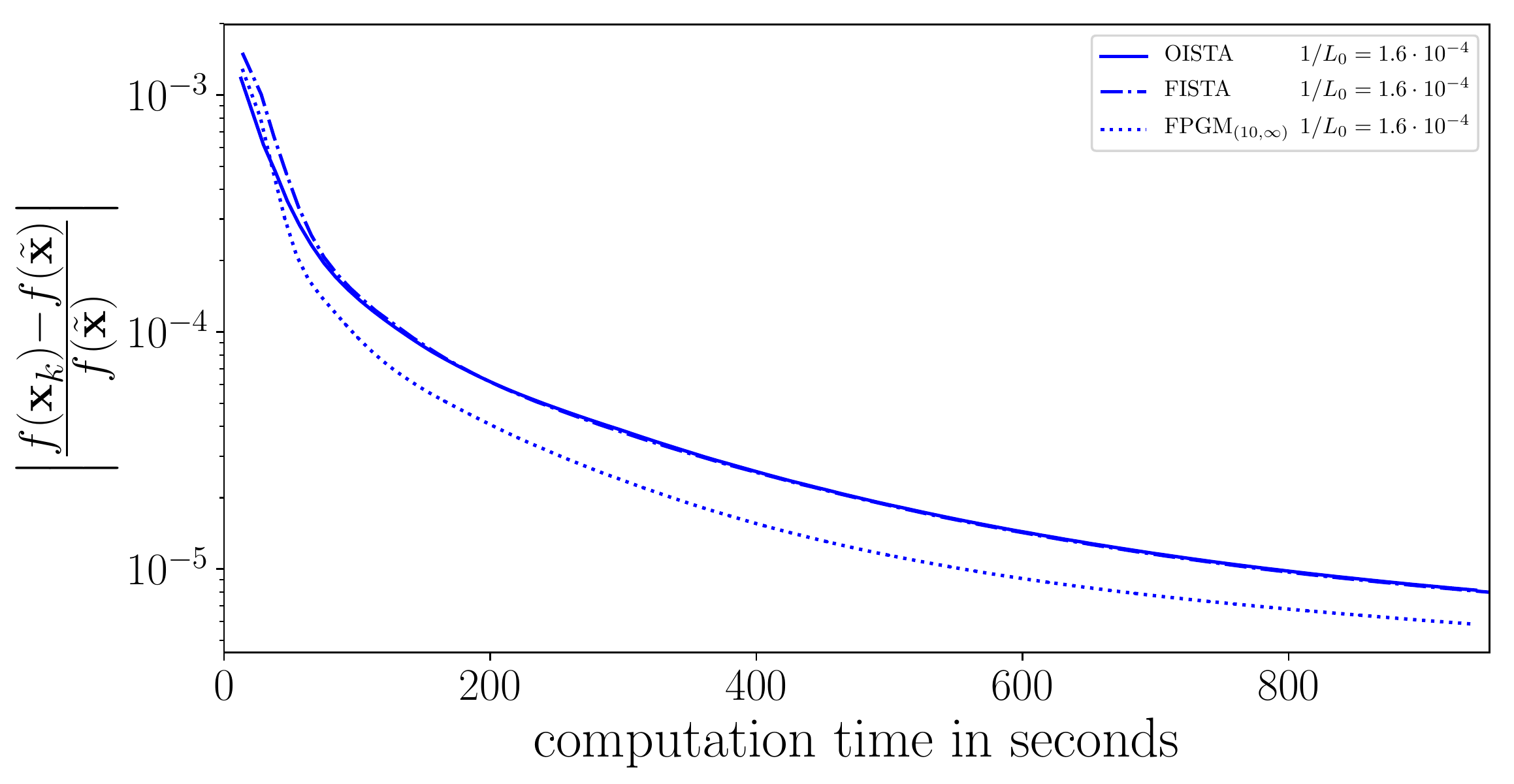}

\caption{Plot of objective function over time in three different algorithms,
running on the CPU.\label{fig:convergence_plots-time-cpu}}
\end{figure}

\paragraph*{Discussion}

~We base our discussion on log-plots of the objective function reduction
as iterations proceed. In these plots we represent the evolution of
the relative difference between the object function value obtained
by each algorithm and the value obtained by FISTA after $2000$ iterations
(using $\nicefrac{1}{L_{0}}=1.3\cdot10^{-4}$). We denote the reference
image obtained by this long run of FISTA by $\tilde{\mathbf{x}}$.
The starting stepsize for the long FISTA run was selected in order
to obtain good convergence and running FISTA for twice as many iterations
as the other methods ensured that the final iterations obtained this
way by FISTA had better objective function value than the other methods.

For a comparison of FPGM with existing algorithms, we have tested
the behavior of FPGM$_{(10,\infty)}$ and compared it to FISTA and
OISTA. We notice in Figure~\ref{fig:convergence_plots} that FPGM$_{(10,\infty)}$
is very competitive regardless of the starting stepsize adopted within
the experimented range. We have tried running the algorithm for a
variety of stepsizes within the range $\nicefrac{1}{L_{0}}\in[9\cdot10^{-5},2\cdot10^{-4}]$.
This range was chosen because it contains the value of $\nicefrac{1}{L_{1000}}$
obtained by the line search procedure if values above this range were
used. This means that smaller stepsize values unnecessarily slow down
the algorithms and should not be used, whereas for larger values the
line search reduces the stepsize in the first few iterations to something
within the adopted range. For the sake of legibility, we present the
results obtained for only two values of $\nicefrac{1}{L_{0}}$ showing
the typical behavior. We note that, for a favorable selection of starting
stepsize FPGM outperforms noticeably the other methods, whereas for
some values of the starting stepsize the improvements may be small.

\begin{figure}
\includegraphics[width=1\columnwidth]{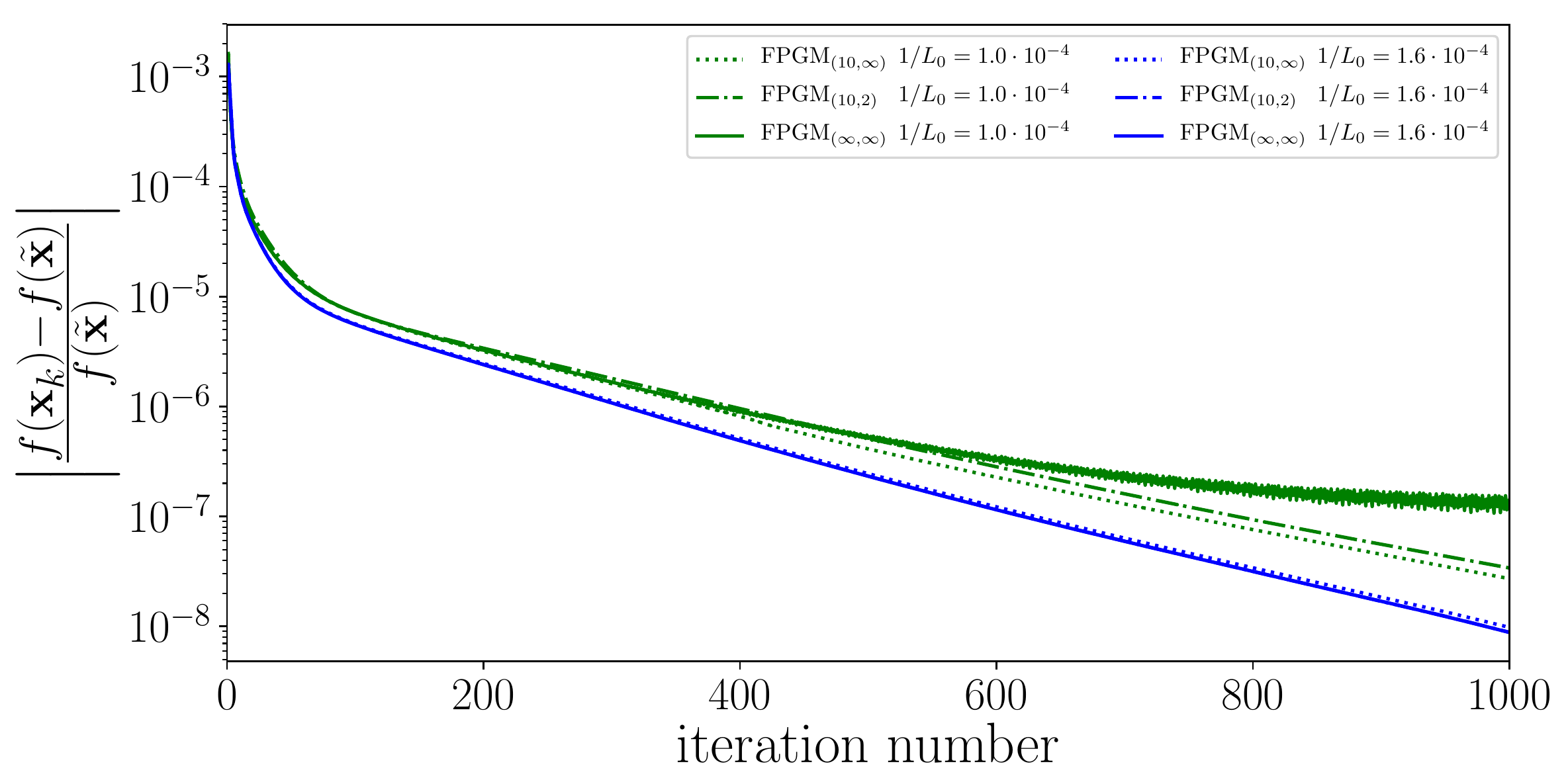}

\caption{Comparison of the behavior of the objective function value of the
iterates of FPGM$_{(K,\overline{\eta})}$ for some values of parameters
$K$, $\overline{\eta}$, and $L_{0}$. Notice that because FPGM$_{(\infty,\infty)}$
might not converge, oscillatory behavior is observed for some starting
stepsize values.\label{fig:fpgm_params}}
\end{figure}

Figure~\ref{fig:fpgm_params} illustrates an experiment designed
to investigate sensitivity of Algorithm~\ref{algo:FPGM} to the choice
of parameters $K$ and $\overline{\eta}$. As it can be seen, at least
for this combination of optimization model and data, FPGM$_{(10,\infty)}$
and FPGM$_{(10,2)}$, which are provably convergent, appear to have
advantages over FPGM$_{(\infty,\infty)}$, which does not necessarily
satisfy~\eqref{eq:decreasing_steps} and, therefore, has no proven
convergence. Also, it seems that limiting the value of $\eta_{k}$
does not necessarily reduce convergence speed when all other parameters
are the same. Convergent versions of FPGM did not present any drastic
performance changes within the set of parameters used, which is a
sign of robustness with relation to parameter selection.

\subsection{Reconstruction from Simulated Data}

In the present subsection we perform reconstruction from simulated
data in order to be able to measure reconstruction quality. We have
performed several similar experiments with the intention of performing
statistical testing on the hypothesis that one algorithm performs
better than the other with respect to a well defined mathematical
figure of merit. We will first describe how each reconstruction experiment
was performed and then we discuss the results of the statistical hypothesis
testing.

\begin{figure}
\includegraphics[width=0.5\columnwidth]{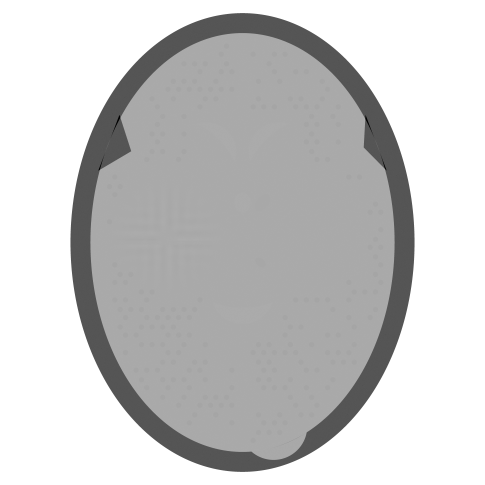}\includegraphics[width=0.5\columnwidth]{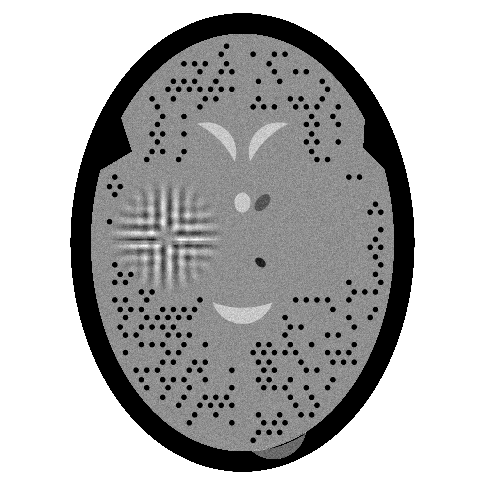}\caption{One of the possible phantoms used in the statistical hypothesis testing
experiment. On the left, the full range of attenuation values of the
phantom is shown, its minimum value represented as white and the maximum
as black. On the right, values below $0.204$~cm$^{-1}$ are shown
as white and values above $0.21765$~cm$^{-1}$ are shown as black.\label{fig:phantom}}
\end{figure}

\begin{figure}
\setlength{\fboxsep}{0pt}%
\setlength{\fboxrule}{0.5pt}%
\fbox{\includegraphics[width=0.33333\columnwidth]{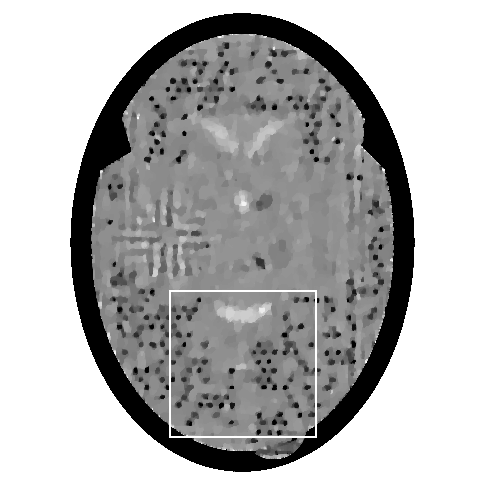}}\hspace*{-0.5pt}%
\fbox{\includegraphics[width=0.33333\columnwidth]{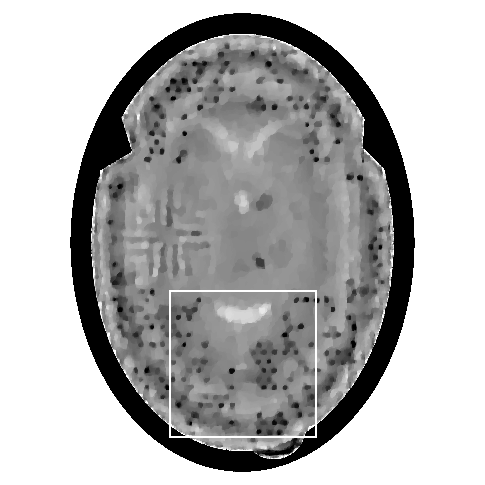}}\hspace*{-0.5pt}%
\fbox{\includegraphics[width=0.33333\columnwidth]{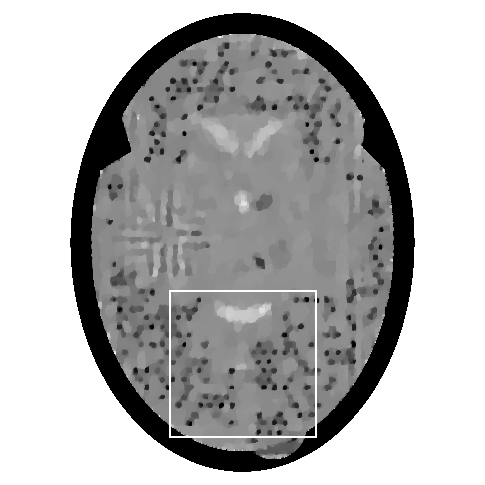}}\\[-3.5pt]
\fbox{\includegraphics[width=0.33333\columnwidth]{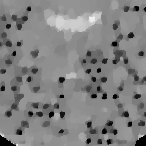}}\hspace*{-0.5pt}%
\fbox{\includegraphics[width=0.33333\columnwidth]{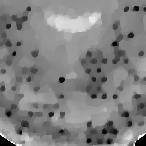}}\hspace*{-0.5pt}%
\fbox{\includegraphics[width=0.33333\columnwidth]{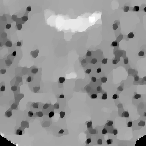}}

\caption{Left: First iteration of SupART to reach stopping criterion. Center:
$50^{\text{th}}$ iteration of FPGM$_{(10,\infty)}^{5\cdot10^{-3}}$.
Right: $100^{\text{th}}$ iteration of FPGM$_{(10,\infty)}^{5\cdot10^{-3}}$.
Top: full images. Bottom: details for easier inspection.}
\label{fig:reconstructions}
\end{figure}

\paragraph*{Test Images and Projection Data}

~Each reconstruction experiment was based on a mathematical phantom
defined from simple geometrical features, which can be seen in Figure~\ref{fig:phantom}.
It has a skull-like high-attenuation feature around it, while possessing
less contrasting objects inside the skull. Among the features inside
the skull there are small similar circular objects scattered inside
the inner structure. The number of such objects is constant, but its
position vary on each experiment in the following form: each object
must exist in a position or else to exist in the same position except
that mirrored along the central vertical axis. The side where the
circular object will be placed is selected at random with equal probability
of selection for left and right placement. The other features of the
phantom do not change from one experiment to another, except for a
small random inhomogeneity, which may also be different from one experiment
to another. Generation of the images, simulation of tomographic data
from each generated image, and reconstruction from each of the simulated
dataset was automatically handled by the \textquotedbl{}experimenter\textquotedbl{}
feature of SNARK14 software.\footnote{SNARK14 may be downloaded free of charge from http://turing.iimas.unam.mx/SNARK14M/}
For this set of experiments, the sequential implementations provided
by SNARK14 were used instead of the specially crafted parallel algorithms
used in the previous set of experiments.

For these experiments, data acquisition followed a divergent beam
geometry where detectors are distributed in a circular strip centered
at the X-ray source with source-to-origin distance (that is, the radius
of the source's circular path followed during acquisition) equal to
$78$ cm and with source-to-detector distance (that is, the radius
of the circular strip of detectors) equal to $110.735$ cm. The simulated
circular detector strip contained $693$ detectors with the spacing
between the center of successive detectors equal to $0.0533$ cm.
Each projection contains data from all $693$ detectors and $180$
of such projections were measured, $2^{\circ}$ apart from each other.
Reconstructed images had $485\times485$ pixels covering the square
$[-9.1\text{ cm},9.1\text{ cm}]^{2}$.

\paragraph*{Reconstruction Model}

~In these experiments, for the reconstruction with FPGM we used the
least squares model with Total Variation regularization. Let us first
define the Total Variation as
\begin{equation}
TV(\mathbf{x}):=\sum_{i=1}^{m}\sqrt{\left(x_{i}-x_{r(i)}\right)^{2}+\left(x_{i}-x_{a(i)}\right)^{2}},\label{eq:TV}
\end{equation}
where $r(i)$ is the index of the pixel to the right of pixel $i$
and $a(i)$ is the index of the pixel above pixel~$i$. The Total
Variation regularization functional was chosen because of its computational
convenience and good regularization properties for tomographic images.

Our model therefore is
\begin{equation}
\min_{\mathbf{x}\in\mathbb{R}_{+}^{n}}\quad\left\Vert R\mathbf{x}-\mathbf{b}\right\Vert ^{2}+\lambda TV(\mathbf{x}),\label{eq:TV_model}
\end{equation}
where $R\in\mathbb{R}^{m\times n}$ is the matrix representing the
discretization of the Radon Transform and \textbf{$\mathbf{b}\in\mathbb{R}^{n}$}
contains the simulated data. This model can be rewritten as
\begin{equation}
\min_{\mathbf{x}\in\mathbb{R}^{n}}\quad f(\mathbf{x})+\phi(\mathbf{x}),
\end{equation}
where $f(\mathbf{x})=\left\Vert R\mathbf{x}-\mathbf{b}\right\Vert ^{2}$
and $\phi(\mathbf{x})=\lambda TV(\mathbf{x})+\chi_{\mathbb{R}_{+}^{n}}(\mathbf{x})$.
The proximal operator for $\phi(\mathbf{x})=\lambda TV(\mathbf{x})+\chi_{\mathbb{R}_{+}^{n}}(\mathbf{x})$
was approximately computed by $10$ steps of the fast gradient method
for the dual formulation of the constrained $TV$ proximal problem,
according to~\cite{Beck2009a}.

\paragraph*{ART}

~One of the tested algorithms is FPGM$_{(10,\infty)}$, as described
above. The other is a superiorized version of the Algebraic Reconstruction
Technique (ART). ART is a common denomination for a series of algorithms
that sequentially update the reconstructed image using one row of
the system $R\mathbf{x}=\mathbf{b}$ (where $\mathbf{b}$ is the tomographic
data and $R$ is the projection matrix) at a time. The meaning of
\textquotedbl{}superiorized\textquotedbl{} will be explained after
we describe ART itself. The precise version of ART used here is described
as Algorithm~\ref{algo:ART} below, where $\mathbf{r}_{i}$ is the
$i$-th row of the matrix $R$ (an $n$-dimensional row-vector).

\begin{algorithm}
\caption{ART}
\label{algo:ART}

\begin{algorithmic}[1]

\STATE{\textbf{input }$\alpha\in\mathbb{R}$, $\left\{ i_{0},i_{1},\dots,i_{m-1}\right\} \subset\mathbb{N}$,
$\mathbf{x}\in\mathbb{R}^{n}$, $\mathbf{b}\in\mathbb{R}^{m}$}

\STATE{\textbf{set }$\mathrm{\mathbf{y}_{0}}=\mathbf{x}$}

\STATE{\textbf{for }$\ell=0,...,m-1$}

\STATE{~~~~\textbf{set }$\mathbf{y}_{\ell+1}=\mathbf{y}_{\ell}-\alpha\frac{\left\langle \mathbf{r}_{i_{\ell}}^{T},\mathbf{y}_{\ell}\right\rangle -\mathbf{b}_{i_{\ell}}}{\left\Vert \mathbf{r}_{i_{\ell}}^{T}\right\Vert ^{2}}\mathbf{r}_{i_{\ell}}^{T}$}

\STATE{\textbf{return $\mathbf{y}_{m}$}} 

\end{algorithmic}
\end{algorithm}

We will denote the result of the application of Algorithm~\ref{algo:ART}
to a given $\mathbf{x}\in\mathbb{R}^{n}$ as ART$(\mathbf{x})$, omitting
the other input parameters that will be kept fixed as follows: $\alpha=0.05$,
which is known as the relaxation parameter, $\mathbf{b}\in\mathbb{R}^{m}$
which is the tomographic data, and the sequence $\left\{ i_{0},i_{1},\dots,i_{m-1}\right\} $,
that determines the order of processing of the tomographic data within
the full cycle of Algorithm~\ref{algo:ART}, which is defined according
to the \textquotedbl{}efficient\textquotedbl{} sequence described
in~\cite{hem93}. This sequence was shown experimentally to improve
convergence of the method.

\paragraph*{Superiorization}

~Superiorization is a technique for the modification of iterative
optimization (or feasibility) algorithms that possess resiliency to
summable perturbations. It consists of replacing the iteration $\mathbf{x}_{k}$
of a given algorithm by a perturbed version $\tilde{\mathbf{x}}_{k}$
prior to applying the computations that will lead to the next iteration
$\mathbf{x}_{k+1}$ of the method. The perturbations, which we may
denote as $\mathbf{s}_{k}:=\tilde{\mathbf{x}}_{k}-\mathbf{x}_{k}$,
are usually designed to improve the iterates according to a secondary
criterion represented by a function $\phi:\mathbb{R}^{n}\to\mathbb{R}$.
By an improvement we mean a reduction of the value of $\phi$, that
is, $\mathbf{s}_{k}$ is normally designed so that $\phi\left(\mathbf{x}_{k}+\mathbf{s}_{k}\right)\leq\phi\left(\mathbf{x}_{k}\right)$.
In our experiments we have used $\phi=TV$, i.e., the Total Variation
defined in~\eqref{eq:TV} is the secondary criterion for superiorization.

Algorithm~\ref{algo:SupTV} describes the procedure that we use to
superiorize ART in our experiments. It makes use of the concept of
non-ascending vectors. We define the set $\tilde{\partial}\phi(\mathbf{x})$
of non-ascending vectors for a convex function $\phi:\mathbb{R}^{n}\to\mathbb{R}$
from a point $\mathbf{x}\in\mathbb{R}^{n}$ as the set of vectors
$\mathbf{t}\in\mathbb{R}^{n}$ with $\|\mathbf{t}\|\leq1$ and such
that there is $B>0$ for which $b\in[0,B]$ implies that $\phi(\mathbf{x}+b\mathbf{t})\leq\phi(\mathbf{x})$.
Notice that $\mathbf{0}\in\tilde{\partial}\phi(\mathbf{x})$, meaning
that the set of non-ascending vectors is never empty. Following~\cite[Theorem\ 2]{hgd12},
we can construct a non-ascending vector $\mathbf{t}$ for $\phi$
from $\mathbf{x}$ as follows, first componentwise defining an auxiliary
vector $\tilde{\mathbf{t}}\in\mathbb{R}^{n}$ as
\begin{equation}
\tilde{t}_{i}=\begin{cases}
\frac{\partial\phi}{\partial x_{i}}(\mathbf{x}) & \text{if }\frac{\partial\phi}{\partial x_{i}}(\mathbf{x})\text{ is well defined}\\
0 & \text{otherwise,}
\end{cases}\quad\text{and then defining\quad\ensuremath{\mathbf{t}}}=\begin{cases}
-\frac{\tilde{\mathbf{t}}}{\|\tilde{\mathbf{t}}\|} & \text{if }\tilde{\mathbf{t}}\neq\mathbf{0}\\
\mathbf{0} & \text{otherwise.}
\end{cases}
\end{equation}
We will denote the output of Algorithm~\ref{algo:SupTV} for given
$\mathbf{x}\in\mathbb{R}^{n}$ and $l\in\mathbb{N}$ as SupTV$(\mathbf{x},l)$,
omitting parameters $a$, \textbf{$b$} and $I$, which were kept
fixed with the values $a=1-10^{-4}$, $b=3\cdot10^{-2}$, and $I=10$.

\begin{algorithm}
\caption{SupTV}
\label{algo:SupTV}

\begin{algorithmic}[1]

\STATE{\textbf{input} $\mathbf{x}\in\mathbb{R}^{n}$, $l\in\mathbb{N}$,
$a\in(0,1)$, $b\in(0,\infty),$ $I\in\mathbb{N}$}

\STATE{\textbf{set} $\mathbf{y}=\mathbf{x}$}

\STATE{\textbf{for $i=1,2,\dots,I$}}

\STATE{$\quad$\textbf{set }$\mathbf{t}\in\tilde{\partial}TV(\mathbf{y})$}

\STATE{$\quad$\textbf{do}}

\STATE{$\quad$$\quad$\textbf{set }$\tilde{b}=ba^{l}$}

\STATE{$\quad$$\quad$\textbf{set }$\tilde{\mathbf{y}}=\mathbf{y}+\tilde{b}\mathbf{t}$}

\STATE{$\quad$$\quad$\textbf{set }$l=l+1$}

\STATE{$\quad$\textbf{while} $TV(\tilde{\mathbf{y}})>TV(\mathbf{y})$}

\STATE{$\quad$\textbf{set }$\mathbf{y}=\tilde{\mathbf{y}}$}

\STATE{$\textbf{return}$ $(\mathbf{y},l)$}

\end{algorithmic}
\end{algorithm}

It is important to notice that superiorization according to the criterion
$\phi$ does not mean minimization of that criterion. This makes the
approach much more flexible than optimization because the theory is
easier to develop for a large variety of secondary criteria. On the
other hand, it is clear that, because there is no guarantee of minimization,
there may exist images that are equally data-consistent than those
obtained by superiorization, but that are better according to $\phi$.
In practice, however, superiorized methods have been shown to be almost
as fast as the non-superiorized version of the same method, to be
very flexible and simple to use in practice, and to deliver considerably
improved results when compared to its non-superiorized counterpart.

\begin{algorithm}
\caption{SupART}
\label{algo:SupART}

\begin{algorithmic}[1]

\STATE{\textbf{input} $\mathbf{x}_{0}\in\mathbb{R}^{n}$, $\epsilon\in\mathbb{R}$,
$\mathbf{b}\in\mathbb{R}^{m}$}

\STATE{\textbf{set} $l_{0}=0$, $k=0$}

\STATE{\textbf{while $\left\Vert R\mathbf{x}_{k}-\mathbf{b}\right\Vert ^{2}>\epsilon$}}

\STATE{$\quad$\textbf{set }$\left(\mathbf{x}_{k+\nicefrac{1}{2}},l_{k+1}\right)=\text{SupTV}\left(\mathbb{\mathbf{x}}_{k},l_{k}\right)$}

\STATE{$\quad$\textbf{set }$\mathbf{x}_{k+1}=\text{ART}\left(\mathbb{\mathbf{x}}_{k+\nicefrac{1}{2}}\right)$}

\STATE{$\quad$\textbf{set }$k=k+1$}

\STATE{$\textbf{return}$ $\mathbf{x}_{k}$}

\end{algorithmic}
\end{algorithm}

\paragraph*{SupART}

~We can finally precisely define the algorithm that we have compared
to FPGM in the experiments described in the present subsection. Each
iteration of Superiorized ART (SupART), presented in Algorithm~\ref{algo:SupART},
is defined as application of Algorithm~\ref{algo:SupTV} followed
by application of Algorithm~\ref{algo:ART}. Unlike FPGM which uses
a prescribed number of iterations, our version of SupART uses a stopping
criterion based in data-fidelity, as measured by the squared residual
$\|R\mathbf{x}-\mathbf{b}\|^{2}$. We have done so in order to force
the images obtained by SupART to have a data-fidelity value that is
similar to those obtained by FPGM. We will describe precisely how
it was done below, when discussing the parameter selection for the
algorithms and models.

\paragraph*{Parameter Selection}

~Algorithm FPGM$_{(10,\infty)}$ was used with parameters $L_{0}=1$,
$\beta=2$, and $N=100$ in order to solve model~\eqref{eq:TV_model}.
In this case we are not comparing convergence speed between variations
of the same algorithm, so the choice of parameters is not critical
and these values work well in practice. For the model to be fully
defined, the value of regularization parameter $\lambda$ has to be
specified. We now describe how we have decided which values to use
for $\lambda$: We have generated a phantom and simulated tomographic
data from this phantom as described above, and then ran the method
for the set of parameters $\lambda\in\{10^{-4},5\cdot10^{-4},10^{-3},5\cdot10^{-3},10^{-2},5\cdot10^{-2}\}$.
We verified that, among these, the value $\lambda=5\cdot10^{-3}$
provided the highest value of the Imagewise Region of Interest (IROI)
figure of merit (the IROI is precisely described below, see~\eqref{eq:IROI}).
Then, we have selected two other values for the regularization parameter
within the same order of magnitude, leading to the final set of regularization
parameters that were used in the final algorithmic comparison: $\lambda\in\{4,5,6\}\cdot10^{-3}$.

SupART has already been tested for this reconstruction problem in~\cite{gah17},
and we have used the (already mentioned above) same set of parameters
as in that reference ($\alpha=0.05$, $a=1-10^{-4}$, $b=3\cdot10^{-2}$)
with one exception. In~\cite{gah17} the authors have used $I=40$,
while we have used $I=10$ in order to match the number of the iterations
used by the approximation algorithm for the proximal operator of the
$TV$ that is used in FPGM. SupART also requires the determination
of the stopping parameter $\epsilon$, which we have selected to be
$2.33$ because this is, up to three digits accuracy, what FPGM$_{(10,\infty)}$
obtained after $100$ iterations when solving~\eqref{eq:TV_model}
with $\lambda=5\cdot10^{-3}$.

Finally, both FPGM and SupART require the selection of a starting
image $\mathbf{x}_{0}$, which we have selected to be a uniform image
such that $\sum_{i=1}^{m}\left(R\mathbf{x}_{0}\right)_{i}=\sum_{i=1}^{m}b_{i}$.

\paragraph*{Image Reconstruction Evaluation}

~Comparison of the tested algorithms was done according to the IROI
figure of merit. The IROI is defined in~\cite{nah99} and we provide
a succinct explanation of it here. The IROI considers only paired
structures, which in our case were each of the small circles (tumors)
in the phantom shown at Figure~\ref{fig:phantom} and its empty counterpart
on the other side of the phantom. We associate each of these pairs
of structures with an index ranging from $1$ to $S$. Let, for $s\in\{1,2,\dots,S\}$,
$\alpha_{t}^{p}(s)$ be the average density in the phantom on the
region where the tumor $s$ is present, and $\alpha_{n}^{p}(s)$ be
the average density in the phantom on the respective region where
the tumor $s$ is not present. Similarly, $\alpha_{t}^{r}(s)$ and
$\alpha_{n}^{r}(s)$ are the average values of these regions in the
reconstructed images. Then, the value of the IROI is given by
\begin{equation}
\frac{\sum_{s=1}^{S}\bigl(\alpha_{t}^{r}(s)-\alpha_{n}^{r}(s)\bigr)}{\sum_{s=1}^{S}\left(\alpha_{n}^{r}(s)-\frac{1}{S}\sum_{s'=1}^{S}\alpha_{n}^{r}(s')\right)^{2}}/\frac{\sum_{s=1}^{S}\bigl(\alpha_{t}^{p}(s)-\alpha_{n}^{p}(s)\bigr)}{\sum_{s=1}^{S}\left(\alpha_{n}^{p}(s)-\frac{1}{S}\sum_{s'=1}^{S}\alpha_{n}^{p}(s')\right)^{2}}.\label{eq:IROI}
\end{equation}
Large IROI values mean that we have large differences $\alpha_{t}^{r}(s)-\alpha_{n}^{r}(s)$
and, therefore, that reconstructed images with large IROI values tend
to offer easier detectability of the tumors in the regions of interest
than images with low IROI.

The last sentence indicates the essential nature of IROI; its purpose
is to measure the efficacy of an algorithm for tumor detectability
in the reconstructions. It is different from measures based on the
overall similarity between the phantoms and their reconstructions,
such as structural similarity (SSIM). Those other measures are likely
to indicate better the perceived quality of the whole reconstruction,
but IROI is more useful for evaluating the diagnostic efficacy of
a reconstruction algorithm for the specific task of tumor detection
\cite{nah99}.

\paragraph*{Discussion}

~We have performed $30$ repetitions of the cycle of \textbf{(i)}
generating a random phantom, \textbf{(ii)} simulating projection data
from the phantom, \textbf{(iii)} reconstructing images from the simulated
data with SupART and FPGM (for the three versions of model~\eqref{eq:TV_model}
with $\lambda\in\{4,5,6\}\cdot10^{-3}$), and \textbf{(iv)} computing
the IROI of the reconstructed images. These $30$ repetitions were
used for statistical testing of the null hypothesis (H$_{0}$) \textquotedbl{}SupART
reconstructs an image with IROI equally large as the one obtained
by FPGM$_{(10,\infty)}^{\lambda}$\textquotedbl{}, where FPGM$_{(10,\infty)}^{\lambda}$
denotes FPGM$_{(10,\infty)}$ applied to the solution of~\eqref{eq:TV_model}
with regularization parameter $\lambda$. The results can be seen
on Table~\ref{tab:results_experimenter}, where we have denoted the
estimated likelihood of rejecting H$_{0}$ while it is true by $P_{0}$.

Figure~\ref{fig:reconstructions} shows reconstructions obtained
by SupART and FPGM$_{(10,\infty)}^{5\cdot10^{-3}}$ in the last of
the $30$ experiments for an illustration of the difference of the
results of the algorithms. We remark that although FPGM is a fast
algorithm, certain features take a large number of iterations to be
accurately captured by the method. Notice how a low-frequency low-contrast
artifact in the form of radial waves can be seen in the $50^{\text{th}}$
iteration of FPGM$_{(10,\infty)}$. This artifact is not present if
the algorithm is allowed to run for more time. This is not a feature
exclusively present in FPGM nor it is exclusive to the particular
selection of algorithmic parameters that were used in our experiments,
but rather a common issue with all the fast proximal gradient methods
which might not have been noticed before because low-contrast features
such as those in the phantom we use are not usually investigated in
simulated reconstructions. Indeed, even Conjugate Gradient methods
seem to present such kind of artifact (see, e.g., Figure~4(b) in~\cite{zch18}).

Although it can be seen that in this particular setting the FPGM reconstructions
are consistently better than SupART reconstructions, we make no claims
that FPGM performs better than SupART in general because the experiments
were performed for a particular set of simulation and algorithmic
parameters. It was not our intent to optimize SupART's parameter for
this specific reconstruction problem, but it may well be the case
that SupART could obtain results as good (or better) as those obtained
by FPGM if the parameters were more carefully adjusted for the particular
problem at hand. However, the experiments allow us to conclude that
FPGM is competitive with state-of-the-art methods in terms of image
quality reconstruction.

\begin{table}
\begin{centering}
\begin{tabular}{ccc}
Algorithm & Mean IROI & $P_{0}$\tabularnewline
\hline 
FPGM$_{(10,\infty)}^{4\cdot10^{-3}}$ & $0.1914$ & $3.093\cdot10^{-8}$\tabularnewline
FPGM$_{(10,\infty)}^{5\cdot10^{-3}}$ & $0.1883$ & $3.437\cdot10^{-8}$\tabularnewline
FPGM$_{(10,\infty)}^{6\cdot10^{-3}}$ & $0.1623$ & $1.342\cdot10^{-7}$\tabularnewline
SupART & $0.1438$ & N/A\tabularnewline
\hline 
\end{tabular}\medskip{}

\par\end{centering}

\centering{}\caption{Mean IROIs (larger is better) and estimated probabilities of rejecting
a true null hypothesis.}
\label{tab:results_experimenter}
\end{table}

\section{Conclusions and Future Work\label{sec:Conclusions-and-Future}}

We have introduced a new algorithm called FPGM with convergence rate
$O(1/k^{2})$ for the minimization of a separable smooth plus nonsmooth
convex function. The convergence theory we have developed for this
algorithm explains why OISTA~\cite{Kim2016c} converges in many circumstances
even though in the worst-case scenario OISTA is non-convergent~\cite{thg17}.
In large-scale experiments with real world data, FPGM consistently
outperforms, from the viewpoint of objective function reduction, FISTA,
and in many cases FPGM performs better than OISTA in practice. Experiments
performed from simulated data showed that FPGM presents reconstructed
images of good quality, but it was noticed that a large number of
iterations may be required in order to eliminate low-contrast artifacts
that are present in the early iterations of the algorithm. In future
research on this subject we plan to investigate the causes of such
artifacts and to devise ways of avoiding them occurring in the early
iterates~\cite{Aharon2006}.

\section*{Acknowledgments}

This study has support by NIH grants R01-AR060238, R01-AR067156, and
R01-AR068966, and was performed under the rubric of the Center of
Advanced Imaging Innovation and Research (CAI$^{2}$R), a NIBIB Biomedical
Technology Resource Center (NIH P41-EB017183). E. S. Helou was partially
supported by FAPESP grants 2013/07375-0 and 2016/24286-9, and CNPq
grant 310893/2019-4. We are also indebted to LNLS for the beam time
granted through proposal number 20160215 and to NVIDIA for the GPU
used in the experiments, which was obtained through the NVIDIA Higher
Education and Research Grants. 

\bibliographystyle{spbasic}
\bibliography{refs_EH_2020_04_13}

\end{document}